\newtheorem{Thm}{Theorem}[section]
\newtheorem{Cor}[Thm]{Corollary}
\newtheorem{Lem}[Thm]{Lemma}
\newtheorem{Prop}[Thm]{Proposition}
\theoremstyle{definition}
\newtheorem{Def}[Thm]{Definition}
\newtheorem{Rem}[Thm]{Remark}
\newtheorem{Exa}[Thm]{Example}
\begin{document}

\title{On the range of random walk on graphs satisfying a uniform condition\footnote{ AMS 2010 subject classification : 60K35, 60J10, 31C20.}}
\author{Kazuki Okamura\footnote{Graduate School of Mathematical Sciences, The University of Tokyo; e-mail : \texttt{kazukio@ms.u-tokyo.ac.jp}  \, \, This work was supported by Grant-in-Aid for JSPS Fellows.}}
\date{}
\maketitle

\begin{abstract}
We consider the range of random walks up to time $n$, $R_{n}$, on graphs satisfying a uniform condition.
This condition is characterized by potential theory. 
Not only all vertex transitive graphs
but also many non-regular graphs satisfy the condition.
We show certain weak laws of $R_{n}$ from above and  below.
We also show that 
there is a graph such that it satisfies the condition and a sequence of the mean of $R_{n}/n$ fluctuates. %changed
By noting the construction of the graph,  
we see that under the condition, the weak laws are best in a sense.
\end{abstract}

\section{Introduction}
The range of random walk $R_{n}$ is simply the number of sites which the random walk visits up to time $n$.
One of the most fundamental problems is whether the process $\{R_{n}\}_{n}$ satisfies law of large numbers.
Dvoretzky and Erd\"os \cite{DE}, Spitzer \cite{Sp} considered the ranges of random walks on $\mathbb{Z}^{d}$
and derived strong law of large numbers.
They used the spacial homogeneity of $\mathbb{Z}^{d}$ heavily. 
We may need to take alternative techniques to consider the range of walks on  graphs which do \textit{not} have such spacial homogeneity.

In this paper we consider the range of random walk on graphs satisfying a uniform condition $(U)$. 
See Definition 1.1 for the definition of the uniform condition. 
This condition is characterized by potential theory, specifically,
effective resistances.
Not only all vertex transitive graphs 
but also some \textit{non-regular} graphs
satisfy $(U)$.
See Section 4 for detail.
We state certain weak laws of $R_{n}$ from above and  below in Theorem 1.2.
Under a stronger assumption, certain strong laws holds for $R_{n}$. 
In Theorem 1.3, 
we state the existence of a graph such that it satisfies $(U)$
and a sequence of the mean of $R_{n}/n$ fluctuates.
This construction shows that under $(U)$, the two convergences are best in a sense.

Now we describe the settings.
Let $(X, \mu)$ be an weighted graph.
That is, $X$ is an infinite weighted graph and $X$ is endowed with a weight $\mu_{xy}$,
which is a symmetric nonnegative function on $X \times X$
such that $\mu_{xy} > 0$ if and only if $x$ and $y$ are connected.
We write $x \sim y$ if $x$ and $y$ are connected by an edge.
Let $\mu_{x} = \sum_{y \in X} \mu_{xy}$, $x \in X$.
Let $\mu(A) = \sum_{x \in A} \mu_{x}$ for $A \subset X$.

In this paper we assume that $\sup_{x \in X}\deg(x) < +\infty$ and $0 < \inf_{x, y \in X, x \sim y} \mu_{xy} \le \sup_{x, y \in X, x \sim y} \mu_{xy} < +\infty$.
\textit{Whenever we do not refer to weights,
we assume that $\mu_{xy} = 1$ for any $x \sim y$.}

%%RW
Let $\{S_{n}\}_{n \geq 0}$ be a Markov chain  on $X$
whose transition probabilities are given by $P(S_{n+1} = y | S_{n} = x) = \mu_{xy}/\mu_{x}$, $n \geq 0$, $x, y \in X$.
We write $P = P_{x}$ if $P(S_{0} = x) = 1$. 
We say that $(X, \mu)$ is recurrent (resp. transient) 
if $(\{S_{n}\}_{n \geq 0}, \{P_{x}\}_{x \in X})$ is recurrent (transient).
Let the random walk range $R_{n} = |\{S_{0}, \dots, S_{n-1}\}|$.

Let $T_{A} = \inf \{n \geq 0 : S_{n} \in A\}$ and 
$T_{A}^{+} = \inf \{n \geq 1 : S_{n} \in A\}$ for $A \subset X$.
For $x,y \in X$, $n \geq 0$ and  $B \subset X$, 
let $p_{n}^{B}(x, y) = P_{x}(S_{n} = y,  T_{B^{c}} > n)/\mu_{y}$ and $g^{B}(x, y) =  \sum_{n \ge 0} p_{n}^{B}(x, y)$.
Let $p_{n}(x, y) = p^{X}_{n}(x,y)$
and $g(x, y) = g^{X}(x,y)$.

Let $F_{1} = \inf_{x \in X} P_{x}(T_{x}^{+} < +\infty)$ and
$F_{2} = \sup_{x \in X} P_{x}(T_{x}^{+} < +\infty)$.

%%potential theory
Let $d$ be the graph metric on $X$.
Let $B(x, n) = \{y \in X : d(x,y) < n\}$, $x \in X$, $n \in \mathbb{N}_{\ge 1}$.
Let $V(x, n) = \mu(B(x, n))$.
Let $\mathcal{E}(f, f) = \frac{1}{2} \sum_{x, y \in X, x\sim y} (f(x)-f(y))^{2}\mu_{xy}$ for $f : X \to \mathbb{R}$.
Let us define the effective resistance  by 
$R_{\text{eff}}(A, B)^{-1} = \inf \{\mathcal{E}(f,f) : f|_{A} = 1,  f|_{B} = 0\}$ for $A, B \subset X$ with $A \cap B = \emptyset$.

Let $\rho(x,n) = R_{\textrm{eff}}(\{x\}, B(x,n)^{c})$, $x \in X, n \in \mathbb{N}$.
Let $\rho(x)  = \lim_{n \to \infty} \rho(x, n)$. 
If $(X, \mu)$ is recurrent (resp. transient), then, $\rho(x) = +\infty$ (resp. $\rho(x) < +\infty$) for any $x \in X$.

Now we define a uniform condition for weighted graphs.
%%potential theoretic!!!
\begin{Def}[uniform condition]
We say that an weighted graph $(X, \mu)$ satisfies $(U)$
if $\rho(x, n)$ converges \textit{uniformly} to $\rho(x)$, $n \to \infty$.
\end{Def}

Not only vertex transitive graphs 
(e.g. $\mathbb{Z}^{d}$, the $M$-regular tree $T_{M}$, Cayley graphs of groups) 
but also some non-regular  graphs
(e.g. graphs which are roughly isometric with $\mathbb{Z}^{d}$, Sierpi\'nski gasket or carpet)
satisfy $(U)$ if all weights are equal to $1$.
See Section 4 for detail. 

Now we describe the main results.

\begin{Thm}
Let $(X, \mu)$ be an weighted graph satisfying $(U)$.
Then, for any $x \in X$ and any $\epsilon > 0$,
we have that
\begin{equation} 
\lim_{n \to \infty} P_{x}(R_{n} \ge n(1-F_{1} + \epsilon)) = 0,  \tag{1.1}
\end{equation}
and, 
\begin{equation} 
\lim_{n \to \infty} P_{x}(R_{n} \le n(1-F_{2} - \epsilon)) = 0. \tag{1.2}
\end{equation}
These convergences are uniform with respect to $x$. 
The convergence in $(1.1)$ is exponentially fast.
\end{Thm}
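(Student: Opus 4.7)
The strategy is to control $R_n$ via two complementary decompositions. The \emph{last-visit} formulation
\[
R_n = \sum_{k=0}^{n-1} X_k, \qquad X_k := \mathbf{1}\{S_j \neq S_k \text{ for all } k < j \leq n-1\},
\]
yields by the strong Markov property $E[X_k \mid \mathcal{F}_k] = P_{S_k}(T_{S_k}^+ > n-1-k)$, where $\mathcal{F}_k := \sigma(S_0,\ldots,S_k)$; in particular $E[X_k \mid \mathcal{F}_k] \geq P_{S_k}(T_{S_k}^+ = +\infty) \geq 1-F_2$ pointwise. The condition $(U)$ enters through a matching upper bound: combining the resistance identity $P_y(T_y^+ < \tau_{B(y,r)^c}) = 1-(\mu_y\rho(y,r))^{-1}$ with the uniform convergence $\rho(y,r)\to\rho(y)$, and a uniform exponential escape bound $P_y(\tau_{B(y,r)^c} > m) \leq C_r\lambda_r^m$ (from $\sup\deg<\infty$ and $\inf\mu_{xy}>0$, using that the walk exits any ball of radius $r$ within $r$ steps with probability at least $c^r$), one shows that for every $\epsilon > 0$ there exists $M$ with
\[
P_y(T_y^+ > m) \leq 1-F_1+\epsilon \qquad \text{for all } y\in X \text{ and } m\geq M.
\]

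For $(1.1)$, introduce the truncated indicator $X_k^{(m)} := \mathbf{1}\{S_j \neq S_k,\ k < j \leq k+m\}$. Then $X_k \leq X_k^{(m)}$ and $E[X_k^{(m)}\mid\mathcal{F}_k]\leq 1-F_1+\epsilon$ for $k\leq n-1-m$, so $R_n \leq \sum_k X_k^{(m)} + m$. Split $\sum_k X_k^{(m)}$ by residue classes modulo $m$: within each class $(X_{jm+r}^{(m)})_j$ is $\mathcal{F}_{(j+1)m+r}$-measurable with conditional mean $\leq 1-F_1+\epsilon$ given $\mathcal{F}_{jm+r}$, so the compensated sum is a $[-1,1]$-bounded martingale-difference sequence of length $\sim n/m$. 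Azuma's inequality within each class and a union bound over the $m$ classes produce the exponential tail bound claimed in $(1.1)$, uniformly in $x$.

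For $(1.2)$, switch to the \emph{first-visit} form $R_n = \sum_k Z_k$ with $Z_k := \mathbf{1}\{S_k \notin \{S_0,\ldots,S_{k-1}\}\}$. Now $M_n := \sum_k(Z_k - E[Z_k\mid\mathcal{F}_{k-1}])$ is a genuine $\mathcal{F}_k$-martingale with $[-1,1]$-bounded increments, giving $M_n = O_P(\sqrt{n})$ by Azuma. Writing $R_n = C_n + M_n$ with $C_n := \sum_k E[Z_k\mid\mathcal{F}_{k-1}]$, the task reduces to $C_n \geq n(1-F_2)-o_P(n)$. Since $E[C_n] = E[R_n] \geq n(1-F_2)$ (via the pointwise bound $E[X_k\mid\mathcal{F}_k] \geq 1-F_2$ applied in the last-visit formulation) and $C_n \leq n$, a Chebyshev argument suffices once one has $\mathrm{Var}(C_n) = o(n^2)$; I would obtain this through a covariance estimate on the predictable terms $E[Z_k\mid\mathcal{F}_{k-1}]$ for $|k-\ell|$ large, leveraging the uniform tail control above to show that the early visited set quickly ceases to influence later conditional return probabilities.

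The principal obstacle is the unadaptedness of the last-visit indicator $X_k$, which depends on the future up to time $n-1$. This blocks a direct analogue of the Azuma argument of $(1.1)$ for the lower bound, forcing the detour through the first-visit compensator $C_n$ and a second-moment estimate in place of an exponential one---precisely why the theorem advertises an exponential rate only for $(1.1)$. Uniformity of both convergences in $x$ is automatic, since every intermediate estimate (notably the uniform return-time bound derived from $(U)$) is uniform in the base vertex.
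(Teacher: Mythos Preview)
Your treatment of $(1.1)$ is essentially the paper's: the uniform return-tail bound $\sup_y P_y(m<T_y^+<\infty)\to 0$ you extract from $(U)$ is exactly Lemma~2.1, and the truncation $X_k\le X_k^{(m)}$ followed by splitting into residue classes modulo $m$ and a Chernoff/Azuma step is precisely what the paper does with its indicators $Y_{i,M}$.

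For $(1.2)$, however, the switch to the first-visit compensator $C_n$ leaves a genuine gap. The variance bound $\mathrm{Var}(C_n)=o(n^2)$ is not supplied by your sketch: the predictable term $W_k:=E[Z_k\mid\mathcal{F}_{k-1}]$ is the $\mu$-weighted fraction of \emph{unvisited} neighbours of $S_{k-1}$, which depends on the entire range near $S_{k-1}$ and not on return times. The uniform tail control of $T_y^+$ has no direct bearing on $\mathrm{Cov}(W_k,W_\ell)$; in the graph of Theorem~1.3, for instance, the $W_k$'s inherit long-range correlations from whichever region ($N_1$-part versus $N_2$-part) the walk currently occupies, and disentangling this would require structural input beyond anything you invoke. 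The phrase ``the early visited set quickly ceases to influence later conditional return probabilities'' conflates two unrelated quantities.

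The paper avoids this entirely by staying with the last-exit indicators for $(1.2)$ as well. Writing $n-R_n=\sum_i(1-Y_{i,n-1-i})\le\sum_i(1-Y_{i,\infty})$ and decomposing
\[
1-Y_{i,\infty}=(1-Y_{i,M})+(Y_{i,M}-Y_{i,\infty}),
\]
the first sum is handled by the same residue-class Chernoff argument as in $(1.1)$, now using $P_{y}(T_{y}^+\le M)\le F_2$ for every $y$. The second summand is the indicator of the event that $S_i$ is avoided for $M$ steps but revisited thereafter; its expectation is at most $\sup_y P_y(M<T_y^+<\infty)$, so a single first-moment Markov inequality makes $\sum_i(Y_{i,M}-Y_{i,\infty})\le n\epsilon/2$ with high probability once $M$ is large, by Lemma~2.1. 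No second-moment computation is needed, and this Markov step is exactly why the rate for $(1.2)$ is not exponential.
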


If $(X, \mu)$ satisfies an assumption which is stronger than $(U)$,  %new
then, certain strong laws hold for $R_{n}$, that is,
\[ 1-F_{2} \le \liminf_{n \to \infty} \frac{R_{n}}{n} \le \limsup_{n \to \infty} \frac{R_{n}}{n} \le 1-F_{1}, \, P_{x}\text{-a.s}. \] 
See Corollary 2.3 for detail.  

\begin{Thm}
There exists an infinite weighted graph $(X, \mu)$ with a reference point $o$ which satisfies $F_{1} < F_{2}$, $(U)$, 
\begin{equation} 
\liminf_{n \to \infty}\frac{E_{o}[R_{n}]}{n} = 1-F_{2}, \text{ and, } \limsup_{n \to \infty}\frac{E_{o}[R_{n}]}{n} = 1-F_{1}. \tag{1.3}
\end{equation}
\end{Thm}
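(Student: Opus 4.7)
The plan is to construct $X$ by glueing together, at growing scales, large balls taken from two distinct transient vertex-transitive graphs $H_1, H_2$, chosen so that their (common, vertex-transitive) return probabilities are strictly ordered as $\phi_1 < \phi_2$. Concretely, one may take $H_i$ to be the regular tree of degree $M_i \ge 3$, for which the return probability equals $1/(M_i-1)$, and choose $M_1>M_2$. Fix a very rapidly growing sequence $r_1 \ll r_2 \ll \cdots$, let $B_k$ be an isomorphic copy of the ball of radius $r_k$ in $H_{\pi(k)}$, where $\pi(k)=1$ for odd $k$ and $\pi(k)=2$ for even $k$, and form $X$ by joining $B_1, B_2, \ldots$ into a chain, each consecutive pair sharing a single boundary vertex. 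Place the reference point $o$ deep inside $B_1$; the infimum and supremum of $P_x(T_x^+<+\infty)$ over $x \in X$ will then agree with $\phi_1$ and $\phi_2$, giving $F_1=\phi_1<\phi_2=F_2$.

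For $(U)$, each $H_i$ satisfies $(U)$ by vertex-transitivity (Section~4). If $x \in B_k$ lies at graph distance at least $n$ from every interface of $X$, then $B(x,n)$ in $X$ is isomorphic to its counterpart in $H_{\pi(k)}$, so $\rho(x,n)$ agrees with the corresponding quantity on $H_{\pi(k)}$. The single-vertex glueings contribute only bounded, $k$-uniform perturbations to $\rho(x)$, controllable via monotone series-parallel comparisons of effective resistances. Combining these observations, the uniform convergence $\rho(x,n)\to \rho(x)$ transfers from $H_1, H_2$ to $X$. Making this control uniform across all interfaces simultaneously, independently of the block index $k$, is the step I expect to be the main technical obstacle.

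For the fluctuation, choose scales $n_k$ together with $r_k$ so that $n_k/n_{k-1}\to\infty$ and so that, with probability tending to $1$, the walk on $X$ started at $o$ has by time $n_k$ settled into $B_k$ while making negligible incursion into $B_{k+1}$ (transience of $H_{\pi(k)}$ and enlarging $r_k$ enforce both properties). Blocks earlier than $B_k$ contribute at most $n_{k-1}=o(n_k)$ distinct sites to $R_{n_k}$, and during $[n_{k-1},n_k]$ the walk agrees, up to a negligible interface error, with a walk on the vertex-transitive graph $H_{\pi(k)}$, whose range has mean $\sim (1-\phi_{\pi(k)})(n_k-n_{k-1})$ by Dvoretzky--Erd\H{o}s type arguments. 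Therefore
\[ \frac{E_o[R_{n_k}]}{n_k} \longrightarrow 1-\phi_{\pi(k)}, \]
which equals $1-F_1$ along odd $k$ and $1-F_2$ along even $k$.

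The matching two-sided bounds $\limsup_n E_o[R_n]/n \le 1-F_1$ and $\liminf_n E_o[R_n]/n \ge 1-F_2$ are immediate consequences of Theorem~1.2 together with $0 \le R_n \le n$: split the expectation into the event appearing in $(1.1)$, resp.\ $(1.2)$, and its complement, and let $\epsilon \downarrow 0$. Combining these bounds with the alternating subsequential limits produced above yields $(1.3)$.
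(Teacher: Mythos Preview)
Your chain construction has a fatal flaw: the graph $X$ it produces is \emph{recurrent}, so $F_1=F_2=1$ and both $F_1<F_2$ and (1.3) fail outright. Indeed, each $B_k$ is a finite tree whose boundary vertices are leaves; you identify exactly one of these leaves with a leaf of $B_{k+1}$, and all the remaining boundary vertices of $B_k$ stay leaves in $X$. Thus $X$ is a tree with a single end, namely the ray threading through the glueing vertices $v_1,v_2,\ldots$. In a tree the effective resistance between two vertices equals their graph distance, so $\rho(o,n)$ eventually equals the distance from $o$ to the $n$-th point along this ray and diverges; hence $\rho(o)=+\infty$ and $X$ is recurrent. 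Equivalently, the walk that ``escapes'' from a point $x$ deep in $B_k$ almost surely hits a leaf of $B_k$ (there are exponentially many, but only one exit vertex), reflects, and with probability tending to $1$ returns to $x$; so $P_x(T_x^+<+\infty)$ is close to $1$, not to $1/(M_{\pi(k)}-1)$. Your claims $F_i=\phi_i$ and ``the walk on $X$ agrees with a walk on $H_{\pi(k)}$ during $[n_{k-1},n_k]$'' both collapse for this reason.

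The paper repairs exactly this issue by attaching new trees at \emph{every} leaf of the current finite tree, not just one: at stage $2n$ one glues a copy of $\tilde T_{N_2}(o)$ to each leaf of $X^{(2n-1)}$ (and dually at odd stages). The resulting $X$ then has $\deg(x)\in\{N_1,N_2\}$ everywhere, hence is roughly isometric to $T_3$ and transient. A second point you are missing is that $(U)$ is not automatic once the degrees are allowed to vary between two values: the paper needs the arithmetic constraint $N_2<(N_1-1)^2$ so that the recursion for $I_x(y,n)$ (Lemma~3.1) is a strict contraction (Lemma~3.3). Your hand-waved ``bounded, $k$-uniform perturbations'' argument does not supply this, and without such a contraction $(U)$ can genuinely fail. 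If you revise your construction to attach trees at all leaves and impose a bound like $M_1<(M_2-1)^2$, you will essentially recover the paper's proof.
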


\begin{Rem}
(i) If $X$ is vertex transitive,
then,
$F_{1} = F_{2}$ and hence $R_{n}/n \to 1-F_{1} \in [0,1]$ in probability.
On the other hand, 
by noting Theorem 1.3, 
there exists an infinite weighted graph $(X, \mu)$ with a reference point $o$ which satisfies $(U)$ and 
$R_{n}/n$ does \textit{not} converge to any $a \in [0,1]$ in probability under $P_{o}$.\\
(ii) If we replace $F_{1}$ (resp. $F_{2}$) with a real number larger than  $F_{1}$ (resp. smaller than $F_{2}$), %changed
(1.1) (resp. (1.2)) fails for an weighted graph in Theorem 1.3.
In this sense, the convergences (1.1) and (1.2) are best.
\end{Rem}

The main difficulty of the proof of Theorem 1.2  is that $P_{x} \ne P_{y}$ can happen for $x \ne y$.
On the other hand, we use the fact in order to show Theorem 1.3.

%%%%%%%%%%%%%%%%%%%%%%%%

\section{Proof of Theorem 1.2}

First, we show the following lemma.
\begin{Lem}
Let $(X, \mu)$ be an weighted graph satisfying $(U)$.
Then, \[ \lim_{n \to \infty} \sup_{x \in X} P_{x}(n < T_{x}^{+} < +\infty) = 0. \]
\end{Lem}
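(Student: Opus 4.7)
The strategy is to wedge the event $\{T_x^+ \le n\}$ inside a ``leaves an intermediate ball in time $\le n$'' event. Let $M := \sup_x \deg(x) < \infty$ and set $m(n) := \lfloor \log n/(2\log M)\rfloor$, which tends to infinity with $n$ but keeps $M^{m(n)} \le \sqrt n$. Since $T_{B(x,m)^c} \ge m$, the inclusion $\{T_x^+ < T_{B(x,m)^c}\} \cap \{T_{B(x,m)^c} \le n\} \subset \{T_x^+ \le n\}$ holds, and rewriting $P_x(n < T_x^+ < \infty) = P_x(T_x^+ < \infty) - P_x(T_x^+ \le n)$ gives, via inclusion--exclusion,
\[
P_x(n < T_x^+ < \infty) \le \bigl[P_x(T_x^+ < \infty) - P_x(T_x^+ < T_{B(x,m)^c})\bigr] + P_x(T_{B(x,m)^c} > n).
\]
It is enough to show that both of the two resulting suprema over $x$ vanish as $n \to \infty$.

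For the bracketed term I would apply the resistance identity $P_x(T_{B(x,m)^c} < T_x^+) = 1/(\mu_x \rho(x,m))$ and its global counterpart $P_x(T_x^+ = \infty) = 1/(\mu_x \rho(x))$ (read as $0$ in the recurrent case). The bracket equals $1/(\mu_x\rho(x,m)) - 1/(\mu_x \rho(x)) = (\rho(x) - \rho(x,m))/(\mu_x \rho(x)\rho(x,m))$ in the transient case and $1/(\mu_x\rho(x,m))$ in the recurrent case. The standing assumption $\inf_{u\sim v}\mu_{uv} > 0$ together with $\rho(x,m) \ge \rho(x,1) = 1/\mu_x$ keeps all factors in the denominators uniformly bounded below by a positive constant, so condition $(U)$ drives this quantity uniformly to zero as $m \to \infty$.

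For the second term, Markov's inequality gives $P_x(T_{B(x,m)^c} > n) \le E_x[T_{B(x,m)^c}]/n$; the crux is a uniform (in $x$) bound on $E_x[T_{B(x,m)^c}]$ depending only on $m$. Starting from $E_x[T_{B(x,m)^c}] = \sum_{y \in B(x,m)}\mu_y\, g^{B(x,m)}(x,y)$, the maximum principle $g^{B(x,m)}(x,y) \le g^{B(x,m)}(x,x) = \rho(x,m)$ (following from $g^B(x,y) = P_x(T_y < T_{B^c})\,g^B(y,y)$ and reversibility), the Rayleigh bound $\rho(x,m) \le m/\inf_{u\sim v}\mu_{uv}$ obtained by estimating the Dirichlet energy along a shortest path from $x$ to $B(x,m)^c$, and the volume bound $|B(x,m)| \le M^m$ combine to yield $E_x[T_{B(x,m)^c}] \le CmM^m$ with $C$ depending only on the global weight and degree constants. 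With $m = m(n)$ this is $O(\sqrt n\log n)$, so the ratio is $O(\log n/\sqrt n)$, uniformly in $x$.

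The main obstacle is obtaining the uniform bound on $E_x[T_{B(x,m)^c}]$, since $B(x,m)$ has no a priori uniform structure as $x$ varies and spectral/Dirichlet estimates on the finite subgraph $B(x,m)$ will depend on $x$. The resolution is to avoid any probabilistic estimate on the finite subgraph and instead rely on the combination of bounded degree (controlling the volume) with the purely deterministic series-law bound $\rho(x,m) \le m/\inf_{u\sim v}\mu_{uv}$.
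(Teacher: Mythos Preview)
Your proof is correct and follows the same two-term strategy as the paper: split via escape from a ball $B(x,m)$, control $P_x(T_{B(x,m)^c} < T_x^+ < \infty)$ by the resistance identity together with $(U)$, and control $P_x(T_{B(x,m)^c} > n)$ via Markov's inequality and the bound $E_x[T_{B(x,m)^c}] \le \rho(x,m)\,V(x,m)$. The only differences are cosmetic: you choose $m(n)\sim\log n$ explicitly and derive the exit-time bound from the Green-function maximum principle, whereas the paper takes $m=n$, re-indexes time as $n f(n)$, and cites Kumagai's Lemma~3.3(v) for the same bound.
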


\begin{proof}
By Kumagai \cite{Kum} Theorem 1.14, 
$\rho(x, n)^{-1} = \mu_{x} P_{x}(T_{x}^{+} > T_{B(x,n)^{c}})$, $x \in X$, $n \ge 1$.
Letting $n \to \infty$,
we have $\rho(x)^{-1} = \mu_{x} P_{x}(T_{x}^{+} = +\infty)$.

Since $\rho(x, 1)^{-1} = \mu_{x}$,
\begin{align*} 
P_{x}(T_{B(x,n)^{c}} < T_{x}^{+} < +\infty) 
&= \mu_{x}^{-1}(\rho(x, n)^{-1} - \rho(x)^{-1})\\
&\le \mu_{x}(\rho(x)- \rho(x, n)).
\end{align*}

Since $\mu_{x} \le \sup_{y \in X}\deg(y) \sup_{y, z \in X, y \sim z} \mu_{yz} < +\infty$ and $(X, \mu)$ satisfies (U),
we see that
\begin{equation}
\lim_{n \to \infty} \sup_{x \in X} P_{x}(T_{B(x,n)^{c}} < T_{x}^{+} < +\infty) = 0. \tag{2.1}
\end{equation}

Since $\sup_{x}\deg(x) < +\infty$ and $\sup_{y, z \in X, y \sim z}\mu_{yz} < +\infty$,
we have that 
$\sup_{x \in X} V(x, n) < +\infty$, $n \ge 1$.
Since $\rho(x, n)^{-1} \ge \inf_{y,z \in X, y \sim z} \mu_{yz}/n > 0$,
we have that 
$\sup_{x \in X} \rho(x, n) < +\infty$, $n \ge 1$.

Thus we can let $f(n) = \sup_{x \in X} \rho(x, n) \sup_{x \in X} V(x, n)$, $n \ge 1$.

By \cite{Kum} Lemma 3.3(v), 
\[ P_{x}(T_{B(x,n)^{c}} \ge nf(n)) \le  \frac{E_{x}[T_{B(x,n)^{c}}]}{nf(n)} \le \frac{\rho(x, n)V(x, n)}{nf(n)} \le \frac{1}{n}.\]
Hence, 
\begin{equation}
\lim_{n \to \infty} \sup_{x \in X} P_{x}(T_{B(x,n)^{c}} \ge nf(n)) = 0. \tag{2.2}
\end{equation}

We have that
\[ P_{x}(nf(n) < T_{x}^{+} < +\infty) 
\le P_{x}(T_{B(x,n)^{c}} < T_{x}^{+} < +\infty) + P_{x}(T_{B(x,n)^{c}} \ge nf(n)). \]
By noting (2.1) and (2.2),
we have that 
$$\lim_{n \to \infty} \sup_{x \in X} P_{x}(nf(n) < T_{x}^{+} < +\infty) = 0.$$
This completes the proof of Lemma 2.1.
\end{proof}

Let $Y_{i, j}$ be the indicator function of %corrected
$\{S_{i} \ne S_{i+k} \text{ for any } 1 \le k \le j\}$.
Let $Y_{i, \infty}$ be the indicator function of $\{S_{i} \ne S_{i+k} \text{ for any } k \ge 1\}$.

\begin{proof}[Proof of Theorem 1.2]
We show this assertion in a manner which is partially similar to the proof of Theorem 1 in Benjamini, Izkovsky and Kesten \cite{BIK}.
However $P_{x} \ne P_{y}$ can happen for $x \ne y$ and hence the random variables $\{Y_{k+aM, M}\}_{a \in \mathbb{N}}$  are \textit{not} necessarily independent.
The details are different from  the proof of Theorem 1 in \cite{BIK}.

First, we will show (1.1).
Let $\epsilon > 0$.
Let $M$ be a positive integer such that $\sup_{x \in X}P_{x}(M < T_{x}^{+} < +\infty) < \epsilon/4$.
We can take such $M$ by Lemma 2.1.

By considering a last exit decomposition (as in \cite{BIK}), 
\[ R_{n} = 1 + \sum_{i=0}^{n-2} Y_{i, n-1-i}
\le M + \sum_{i=0}^{n-1-M} Y_{i, n-1-i}
\le  M + \sum_{i=0}^{n-1-M} Y_{i, M}. \]

Hence for  $n > 2M/\epsilon$,
\begin{align*} 
P_{x}(R_{n} \ge n(1-F_{1} + \epsilon)) 
&\le P_{x}\left(\sum_{i=0}^{n-1-M} Y_{i, M} > n\left(1-F_{1} + \frac{\epsilon}{2}\right)\right) \\
&= P_{x}\left(\sum_{a=0}^{M} \sum_{i \equiv a \!\!\!\!\mod(M+1)}Y_{i, M} > n\left(1-F_{1} + \frac{\epsilon}{2}\right)\right)\\
&\le \sum_{a=0}^{M}P_{x}\left(\sum_{i \equiv a \!\!\!\!\mod(M+1)}Y_{i, M} > \frac{n}{M+1}\left(1-F_{1} + \frac{\epsilon}{2}\right)\right).
\end{align*}

Therefore it is sufficient to show that for each $a\in \{0, 1, \dots, M\}$, 
\[ P_{x}\left(\sum_{i \equiv a \!\!\!\!\mod(M+1)}Y_{i, M} > \frac{n}{M+1}\left(1-F_{1} + \frac{\epsilon}{2}\right)\right) \to 0, \, n \to \infty, \, \text{exponentially fast}. \tag{2.3} \]

For any $t > 0$, we have that
\[ P_{x}\left(\sum_{i \equiv a \!\!\!\!\mod(M+1)}Y_{i, M} > \frac{n}{M+1}\left(1-F_{1} + \frac{\epsilon}{2}\right)\right) \]
\begin{equation} \le \exp\left(-t\frac{n}{M+1}\left(1-F_{1} + \frac{\epsilon}{2}\right)\right)
E_{x}\left[\exp\left(t\sum_{i \equiv a \!\!\!\!\mod(M+1)}Y_{i, M} \right)\right]. \tag{2.4}
\end{equation}

By using the Markov property of $\{S_{n}\}_{n}$,
\begin{align*}
E_{x}\left[\exp\left(t\sum_{i \equiv a \!\!\!\!\mod(M+1)}Y_{i, M} \right)\right]
&= E_{x}\left[ \prod_{i \equiv a \!\!\!\!\mod(M+1)} \exp(tY_{i, M})\right]\\
&\le \left(\sup_{y \in X} E_{y}[\exp(tY_{0, M})]\right)^{n/(M+1)}\\
&= \left(1+ (\exp(t)-1) \sup_{y \in X} P_{y}(T_{y}^{+} > M)\right)^{n/(M+1)}.
\end{align*}

By noting the definition of $M$ and $F_{1}$, 
\begin{align*} 
\sup_{y \in X} P_{y}(T_{y}^{+} > M) 
&\le \sup_{y \in X} P_{y}(M < T_{y}^{+} < +\infty) + \sup_{y \in X} P_{y}(T_{y}^{+} = +\infty) \\
&\le \frac{\epsilon}{4} + 1-F_{1}.
\end{align*}
Hence, for any $t \ge 0$ and $x \in X$, 
\[ E_{x}\left[\exp\left(t\sum_{i \equiv a \!\!\!\!\mod(M+1)}Y_{i, M} \right)\right]
\le \left(1+ (\exp(t)-1)\left(\frac{\epsilon}{4} + 1-F_{1}\right) \right)^{n/(M+1)}.\]

Hence, 
the right hand side of the inequality (2.4)
is less than or equal to 
\begin{equation*} 
\left[\exp\left(-t\left(1-F_{1} + \frac{\epsilon}{2}\right)\right)\left\{1+ (\exp(t)-1)\left(\frac{\epsilon}{4} + 1-F_{1}\right)\right\}\right]^{n/(M+1)}. 
\end{equation*}

It is easy to see that 
for sufficiently small $t_{1} = t_{1}(F_{1}, \epsilon)> 0$, 
\[ \left\{1+ (\exp(t_{1})-1)\left(\frac{\epsilon}{4} + 1-F_{1}\right)\right\}
< \exp\left(t_{1}\left(1-F_{1} + \frac{\epsilon}{2}\right)\right).\]

Thus we have (2.3) and this convergence is uniform with respect to $x$.
This completes the proof of (1.1).\\

Second,  we will show (1.2).
Let $\epsilon > 0$.
Let $M$ be a positive integer.

By a last exit decomposition, 
\begin{align*}
P_{x}(R_{n} \le n(1-F_{2} - \epsilon)) &= P_{x}(n-R_{n} \ge n(F_{2} + \epsilon))\\
&= P_{x}\left(\sum_{i=0}^{n-2} (1- Y_{i, n-1-i}) \ge n(F_{2} + \epsilon)\right)\\
&\le P_{x}\left(\sum_{i=0}^{n-2} (1- Y_{i, \infty}) \ge n(F_{2} + \epsilon)\right).
\end{align*}

Now we have $1- Y_{i, \infty} = 1-Y_{i, M} + Y_{i, M} - Y_{i, \infty}$
and
\begin{align} 
P_{x}\left(\sum_{i=0}^{n-2} (1- Y_{i, \infty}) \ge n(F_{2} + \epsilon)\right)
&\le P_{x}\left(\sum_{i=0}^{n-2} (1- Y_{i, M}) \ge n\left(F_{2} + \frac{\epsilon}{2}\right)\right) \notag \\
&+ P_{x}\left(\sum_{i=0}^{n-2} (Y_{i, M}- Y_{i, \infty}) \ge \frac{n\epsilon}{2}\right) \tag{2.5}.
\end{align}

We have that $Y_{i, M}- Y_{i, \infty}$  is  the indicator function of 
\[ \{S_{i} \ne S_{i+k} \text{ for any } 1 \le k \le M, \, S_{i} = S_{i+k} \text{ for some } k > M\},\]
and hence, $E_{x}[Y_{i, M} - Y_{i, \infty}] \le  \sup_{y \in X} P_{y}(M < T_{y}^{+} < +\infty)$.

Then for any $n$, 
\begin{align} 
P_{x}\left(\sum_{i=0}^{n-2} (Y_{i, M}- Y_{i, \infty}) \ge \frac{n\epsilon}{2}\right)
&\le \frac{2}{n\epsilon} \sum_{i=0}^{n-2} E_{x}[Y_{i, M} - Y_{i, \infty}] \notag \\
&\le \frac{2}{\epsilon} \sup_{y \in X} P_{y}(M < T_{y}^{+} < +\infty). \tag{2.6}
\end{align}

On the other hand, 
\[ P_{x}\left(\sum_{i=0}^{n-2} (1- Y_{i, M}) \ge n\left(F_{2} + \frac{\epsilon}{2}\right)\right)\]
\[ \le \sum_{a = 0}^{M} P_{x}\left(\sum_{i \equiv a \!\!\!\!\mod M+1} (1- Y_{i, M}) \ge \frac{n}{M+1}\left(F_{2} + \frac{\epsilon}{2}\right)\right).\]

By the Markov property of $\{S_{n}\}_{n}$,
we have that for any $t > 0$ and any $a \in \{0,1,\dots, M\}$, 
\[ P_{x}\left(\sum_{i \equiv a \!\!\!\!\mod M+1} (1- Y_{i, M}) \ge \frac{n}{M+1}\left(F_{2} + \frac{\epsilon}{2}\right)\right)\]
\[ \le \exp\left(-t \frac{n}{M+1}\left(F_{2} + \frac{\epsilon}{2}\right)\right) 
E_{x}\left[ \prod_{i \equiv a \!\!\!\!\mod M+1} \exp(t(1-Y_{i, M}))\right] \]
\[ \le \exp\left(-t \frac{n}{M+1}\left(F_{2} + \frac{\epsilon}{2}\right)\right) 
\left(\sup_{y \in X} E_{y}\left[\exp(t(1-Y_{0, M}))\right]\right)^{n/(M+1)}\]
\[ = \left[\exp\left(-t \left(F_{2} + \frac{\epsilon}{2}\right)\right) 
\left\{1 + (\exp(t) - 1)\sup_{y \in X} P_{y}(T_{y}^{+} \le M)\right\}\right]^{n/(M+1)}.\]

Since $\sup_{y \in X} P_{y}(T_{y}^{+} \le M) \le F_{2}$,  
we have that 
for sufficiently small $t_{2} = t_{2}(F_{2}, \epsilon)> 0$, 
\[ \exp\left(-t_{2} \left(F_{2} + \frac{\epsilon}{2}\right)\right)
\left\{1 + (\exp(t_{2}) - 1)\sup_{y \in X} P_{y}(T_{y}^{+} \le M)\right\} < 1.\]

Therefore for any $a \in \{0,1,\dots M\}$, 
\[ P_{x}\left(\sum_{i \equiv a \!\!\!\!\mod M+1} (1- Y_{i, M}) \ge \frac{n}{M+1}\left(F_{2} + \frac{\epsilon}{2}\right)\right) \to 0, n \to \infty.\]

Thus we see that 
\[ P_{x}\left(\sum_{i=0}^{n-2} (1- Y_{i, M}) \ge n\left(F_{2} + \frac{\epsilon}{2}\right)\right)
\to 0, n \to \infty. \tag{2.7} \]
This convergence is uniform with respect to $x$. 

By using (2.5), (2.6) and (2.7), 
we have  
\[ \limsup_{n \to \infty} P_{x}(R_{n} \le n(1-F_{2} - \epsilon)) 
\le \frac{2}{\epsilon} \sup_{y \in X}P_{y}(M < T_{y}^{+} < +\infty).\]

By letting $M \to \infty$,
it follows from Lemma 2.1
that \[ \limsup_{n \to \infty} P_{x}(R_{n} \le n(1-F_{2} - \epsilon)) =0.\]
This convergence is uniform with respect to $x$. 
This completes the proof of (1.2). 
\end{proof}

\begin{Rem}
If $F_{1} = F_{2}$,
then (1.2) is easy to see by noting  (1.1) and  $E_{x}[R_{n}] \ge n(1-F_{2})$, $n \ge 1$, $x \in X$.
\end{Rem}

\begin{Cor}
If $\sup_{x} P_{x}(M < T_{x}^{+} < +\infty) = O(M^{-1-\delta})$ for some $\delta > 0$,
then, certain strong laws hold. 
More precisely, for any $x \in X$,  
\[ 1-F_{2} \le \liminf_{n \to \infty} \frac{R_{n}}{n} \le \limsup_{n \to \infty} \frac{R_{n}}{n} \le 1-F_{1}, \, P_{x}\text{-a.s}.\]
\end{Cor}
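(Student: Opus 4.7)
The plan is to upgrade the weak laws of Theorem 1.2 to almost-sure convergence by combining a Borel--Cantelli argument along a suitable subsequence with the quantitative decay of the ``loop tail'' provided by the hypothesis. The upper bound is essentially immediate from Theorem 1.2: for each $\epsilon > 0$, the proof of (1.1) in fact yields
\[ P_x\bigl(R_n \ge n(1-F_1+\epsilon)\bigr) \le C(\epsilon)\exp\bigl(-c(\epsilon) n\bigr), \]
for some constants $c(\epsilon) > 0$ and $C(\epsilon) < +\infty$ uniform in $x$, so the first Borel--Cantelli lemma gives $\limsup_{n \to \infty} R_n/n \le 1-F_1+\epsilon$ $P_x$-a.s., and a countable sequence $\epsilon_j \downarrow 0$ yields the claimed upper bound. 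The additional hypothesis is not needed here.

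For the lower bound, I would revisit the decomposition (2.5)--(2.7) in the proof of (1.2). For fixed $\epsilon > 0$ and integers $n,M \ge 1$ it produces
\[ P_x\bigl(R_n \le n(1-F_2-\epsilon)\bigr) \le A(n,M) + B(M), \]
with $A(n,M) \le (M+1)\gamma^{\,n/(M+1)}$ for some $\gamma = \gamma(F_2,\epsilon) \in (0,1)$ independent of $M$ and $x$ (the key structural point read off from (2.7) together with the choice of $t_2$), and $B(M) \le (2/\epsilon)\sup_{y \in X} P_y(M < T_y^+ < +\infty)$, which is $O(M^{-1-\delta})$ by the hypothesis. Now fix $\eta > 0$ and set $n_k = \lceil (1+\eta)^k \rceil$ and $M(k) = k$. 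Both $B(M(k)) = O(k^{-1-\delta})$ and $A(n_k,M(k)) \le (k+1)\gamma^{\lceil(1+\eta)^k\rceil/(k+1)}$ are summable in $k$, so Borel--Cantelli gives $R_{n_k} > n_k(1-F_2-\epsilon)$ for all sufficiently large $k$, $P_x$-a.s. Monotonicity of $R_n$ then yields $R_n/n \ge R_{n_k}/n_{k+1}$ for $n_k \le n < n_{k+1}$, whence
\[ \liminf_{n \to \infty} \frac{R_n}{n} \ge \frac{1-F_2-\epsilon}{1+\eta}, \quad P_x\text{-a.s.,} \]
and letting $\eta$ and $\epsilon$ tend to $0$ along countable sequences completes the proof.

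The main obstacle is the joint choice of $M(k)$ and $n_k$: $M(k)$ must grow fast enough for $B(M(k))$ to be summable, yet slow enough to keep $A(n_k,M(k))$ summable, while simultaneously $n_{k+1}/n_k$ must be arbitrarily close to $1$ so that the monotonicity comparison loses no multiplicative constant. The reason a single simultaneous choice such as $M(k) = k$, $n_k = \lceil (1+\eta)^k \rceil$ works is that the base $\gamma$ in (2.7) does not depend on $M$ (ultimately because the bound $\sup_y P_y(T_y^+ \le M) \le F_2$ used in the proof of (1.2) is $M$-free), so a superpolynomial separation between $n_k$ and $M(k)$ is more than enough to dominate the polynomially small $B$-term.
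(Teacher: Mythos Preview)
Your argument is correct, and it shares the paper's two essential ingredients: the exponential decay in (1.1) for the upper bound, and---crucially---the observation that the base $\gamma$ (the paper's $a$) in the bound coming from (2.7) does not depend on $M$, because $\sup_y P_y(T_y^+ \le M) \le F_2$ is $M$-free.

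Where you diverge from the paper is in the execution of the lower bound. You work along the geometric subsequence $n_k = \lceil (1+\eta)^k\rceil$ with $M(k)=k$, apply Borel--Cantelli along $(n_k)$, and then fill in the remaining $n$ via the monotonicity of $R_n$, at the cost of an extra parameter $\eta$ to be sent to $0$. The paper instead couples $M$ to $n$ directly: with $M = M(n) \approx n^{1-\delta/2}$ one has $M^{-1-\delta} \asymp n^{-(1-\delta/2)(1+\delta)}$ (summable for $0<\delta<1$, which may be assumed without loss of generality) and $(M+1)\gamma^{n/(M+1)} \asymp n^{1-\delta/2}\gamma^{\,n^{\delta/2}}$ (summable as well), so that $\sum_{n\ge 1} P_x(R_n \le n(1-F_2-\epsilon)) < +\infty$ and Borel--Cantelli applies to the full sequence with no interpolation step. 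Your route is a standard and perfectly valid alternative, but the paper's choice of $M(n)$ is shorter and avoids the subsequence/monotonicity detour.
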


\begin{proof}
By noting the Borel-Cantelli lemma,
we see that it suffices to show that
for any $x \in X$ and $\epsilon > 0$, 
\[ \sum_{n \ge 1} P_{x}(R_{n} \ge n(1-F_{1} + \epsilon)) < +\infty,  \tag{2.8}\]
and, 
\[ \sum_{n \ge 1} P_{x}(R_{n} \le n(1-F_{2} - \epsilon)) < +\infty. \tag{2.9} \]

(2.8) follows from that the convergence (1.1) is exponentially fast.

By noting (2.5), (2.6) and (2.7),
we have that there exists $a = a(F_{2}, \epsilon) \in (0,1)$ such that
for any $n$ and $M < n$,   
\[ P_{x}(R_{n} \le n(1-F_{2} - \epsilon)) \le \frac{2}{\epsilon} O(M^{-1-\delta}) + a^{n/(M+1)}. \]
If we let $M = n^{1-\delta/2} -1$ for each $n$,
then, 
we see (2.9).
\end{proof}

Since the convergence in (1.1) is exponentially fast,
we can extend Theorem 1 in \cite{BIK},
which treats the range of the random walk bridge on vertex transitive graphs.  

\begin{Cor}
Let $(X, \mu)$ be an weighted graph satisfying $(U)$.
Let $x \in X$. 
We assume  that $\limsup_{n \to \infty} P_{x}(S_{2n} = x)^{1/n} = 1$.
Let $\epsilon > 0$.
Then,
\[ \lim_{n \to \infty} P_{x}\left(R_{n} \ge n(1-F_{1}+\epsilon)|S_{n} = x\right) = 0.\]
The limit is taken on $n$ such that $P_{x}(S_{n} = x) > 0$.
This convergence is exponentially fast.
\end{Cor}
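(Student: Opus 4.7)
The plan is to reduce the conditional probability to an unconditional one via the trivial bound
\[
P_x(R_n \ge n(1-F_1+\epsilon) \mid S_n = x) \le \frac{P_x(R_n \ge n(1-F_1+\epsilon))}{P_x(S_n = x)},
\]
and then to control the numerator using Theorem 1.2 and the denominator using the hypothesis $\limsup_n P_x(S_{2n}=x)^{1/n} = 1$.

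First I would invoke Theorem 1.2 on the numerator: since the convergence in (1.1) is exponentially fast, there exist constants $C, \lambda > 0$ depending only on $F_1$ and $\epsilon$ such that $P_x(R_n \ge n(1-F_1+\epsilon)) \le C e^{-\lambda n}$.

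Next I would derive a subexponential lower bound on the denominator. From the Chapman--Kolmogorov identity, keeping only the intermediate state $z = x$, one has $P_x(S_{m+n}=x) \ge P_x(S_m = x)\, P_x(S_n = x)$. Hence the set $A := \{n \ge 0 : P_x(S_n = x) > 0\}$ is closed under addition and $a_n := \log P_x(S_n=x)$ is superadditive on $A$. Applying Fekete's lemma to the subsequence $\{a_{2n}\}$, the limit $\lim_n P_x(S_{2n}=x)^{1/n}$ exists, and by hypothesis equals $1$. The additive semigroup structure of $A$ (whose gcd is necessarily $1$ or $2$) then propagates this to $\lim_{n \in A, n \to \infty} P_x(S_n=x)^{1/n} = 1$; in particular, for any $\eta > 0$ there is $N$ with $P_x(S_n=x) \ge e^{-\eta n}$ for all $n \in A$ with $n \ge N$.

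Combining these two estimates, for $n \in A$ large enough,
\[
P_x(R_n \ge n(1-F_1+\epsilon) \mid S_n = x) \le C e^{-(\lambda - \eta) n},
\]
and choosing $\eta = \lambda/2$ yields the claimed exponential-speed decay. The main delicate step is the transfer from $2\mathbb{N}$ to the full semigroup $A$ in the Fekete argument, since odd indices in $A$ must be handled via a representation $n = n_0 + 2k$ with $n_0$ an odd element of $A$; however, once the semigroup structure of $A$ is identified this is routine, and the essential work is already contained in Theorem 1.2 together with the Chapman--Kolmogorov inequality.
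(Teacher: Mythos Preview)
Your proposal is correct and matches the argument the paper implicitly intends: the paper does not give a detailed proof of this corollary but simply remarks that, since the convergence in (1.1) is exponentially fast, one can extend the random-walk-bridge result of \cite{BIK}. Your reduction via $P_x(A\mid S_n=x)\le P_x(A)/P_x(S_n=x)$, the exponential bound on the numerator from Theorem~1.2, and the subexponential lower bound on $P_x(S_n=x)$ obtained from Chapman--Kolmogorov superadditivity and Fekete's lemma (with the routine passage from $2\mathbb{N}$ to the full additive semigroup $A$) are exactly the ingredients one needs, and together they yield the claimed exponentially fast decay.
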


%%%%%%%%%%%%%%%%%%%%%%%%%%
%%%%%%%%%%%%%%%%%%%%%%%%%%
%%%%%%%%%%%%%%%%%%%%%%%%%%%
%%%%%%%%%%%%%%%%%%%%%%%%%%

\section{Proof of Theorem 1.3}

To begin with, 
we state a very rough sketch of the proof. 

Let $N_{1}, N_{2}$ be integers such that $3 < N_{1} < N_{2} < (N_{1}-1)^{2}$. 
First, we prepare a finite tree with degree $N_{1}$ and denote it $X^{(1)}$.
Second, we surround $X^{(1)}$ with finite trees with degree $N_{2}$.
We denote the graph we obtain by $X^{(2)}$.
Third, we surround $X^{(2)}$ with finite trees with degree $N_{1}$.
We denote the graph we obtain by $X^{(3)}$.
Repeating this construction,
we obtain an increasing sequence of finite trees $(X^{(n)})_{n}$.
$X^{(2n+1)} \setminus X^{(2n)}$ (resp. $X^{(2n+2)} \setminus X^{(2n+1)}$) is a ring-like object consisting of the $N_{1}$ (resp. $N_{2}$) -trees. 
Let $r_{2n+1}$ (resp. $r_{2n+2}$) be the ``width" of the ring. 
Assume $r_{i} \ll r_{i+1}$ for any $i$. 
Let $X$ be the infinite graph of the limit of $(X^{(n)})_{n}$.
This satisfies $(U)$, because $N_{1}$ and $N_{2}$ are not too far apart.
Lemma 3.3 states this formally. 
$X$ also satisfies $F_{1} < F_{2}$ and (1.3), because $r_{i} \ll r_{i+1}$ for any $i$.

In this section, 
we assume that any weight is equal to $1$, that is, $\mu_{xy} = 1$ for any $x \sim y$.

Let $X$ be an infinite tree.
For a connected subgraph $Y$ of $X$, 
we denote the restriction of 
$\mathcal{E}$, $\deg$, and $\rho$ to $Y$ 
by $\mathcal{E}_{Y}$, $\deg_{Y}$, and $\rho_{Y}$ respectively.
For a connected subgraph $Y \subset X$,
we let 
$\text{diam}(Y) = \sup_{y_{1}, y_{2} \in Y} d(y_{1}, y_{2})$.
Here $d$ is the graph distance on $X$.

Let $x \in X$.
Let \[ D_{x}(y) = \left\{z \in X : \text{the path between } x \text{ and } z \text{ contains } y\right\}, \, y \in X.\]
We remark that $y \in D_{x}(y)$ and $D_{x}(x) = X$.
Let 
$I_{x}(y, n) 
= \rho_{D_{x}(y)}(y,n)^{-1}$, $y \in X$. 
%= \inf\left\{\mathcal{E}_{D_{x}(y)}(f, f) : f(y) = 1, f = 0 \text{ on } D_{x}(y) \setminus B_{D_{x}(y)}(y,n)\right\}.\]
We remark that $I_{x}(x, n) = \rho_{X}(x,n)^{-1} = \rho(x,n)^{-1}$.
Then we have the following.

\begin{Lem}
Let $X$ be an infinite tree. 
Let $x, y \in X$.
Let $n \ge 1$.
Let $y_{i}$, $1 \le i \le \deg_{D_{x}(y)}(y)$, be the neighborhoods of $y$ in $D_{x}(y)$. %Here $\deg_{D_{x}(y)}(y)$ denotes the degree of $y$ in $D_{x}(y)$. 
Then,
\[ I_{x}(y, n+1) = \sum_{i=1}^{\deg_{D_{x}(y)}(y)}
\frac{ I_{x}(y_{i}, n)}{1+  I_{x}(y_{i}, n)}.\]
\end{Lem}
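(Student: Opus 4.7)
My plan is to reduce the identity to the classical series--parallel law for effective resistance on a tree, applied to the natural decomposition of $D_x(y)$ into branches at $y$.

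First I would spell out the structural picture. Because $X$ is a tree, $D_x(y)$ is itself a subtree, and the induced graph metric on $D_x(y)$ agrees with the restriction of the graph metric on $X$: paths in a tree are unique and, when their endpoints lie in $D_x(y)$, they stay inside $D_x(y)$. Writing $y_1,\ldots,y_k$ for the neighbours of $y$ inside $D_x(y)$ and setting $T_i := \{y\}\cup D_x(y_i)$ (with the edge $\{y,y_i\}$ adjoined), one obtains the disjoint decomposition
\[ D_x(y) = \{y\} \sqcup \bigsqcup_{i=1}^{k} D_x(y_i).\]
For any $z \in D_x(y_i)$ the unique path from $y$ to $z$ in $X$ passes through $y_i$ first, so $d_{D_x(y)}(y,z) = 1 + d_{D_x(y_i)}(y_i, z)$. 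This identifies the boundary set as
\[ B(y, n+1)^c \cap D_x(y) = \bigsqcup_{i=1}^{k} \bigl(D_x(y_i) \setminus B_{D_x(y_i)}(y_i, n)\bigr).\]

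Next I would invoke the series--parallel law in two steps. The branches $T_1, \dots, T_k$ intersect only at $y$, and each portion of the boundary lies in a single $T_i$, so by the parallel law the conductances add:
\[ \rho_{D_x(y)}(y, n+1)^{-1} = \sum_{i=1}^{k} R_i^{-1},\]
where $R_i$ denotes the effective resistance inside $T_i$ from $y$ to the boundary portion $D_x(y_i)\setminus B_{D_x(y_i)}(y_i, n)$. Inside $T_i$, the edge $\{y, y_i\}$ is the only edge incident to $y$, so $y_i$ is a cut vertex separating $y$ from the boundary; the series law then yields $R_i = 1 + \rho_{D_x(y_i)}(y_i, n)$. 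Rewriting everything in terms of the conductances $I_x(\cdot, n) = \rho_{D_x(\cdot)}(\cdot, n)^{-1}$ gives
\[ I_x(y, n+1) = \sum_{i=1}^{k} \frac{1}{1 + \rho_{D_x(y_i)}(y_i, n)} = \sum_{i=1}^{k} \frac{I_x(y_i, n)}{1 + I_x(y_i, n)},\]
which is the stated formula.

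The only real obstacle is justifying the parallel and series laws in the present point-to-set formulation. This is entirely standard: one can either appeal to the uniqueness of the unit current flow from $y$ to the boundary — on a tree, all current from $y$ must leave via exactly one of the cut edges $\{y, y_i\}$ and then pass through the cut vertex $y_i$ into the branch $D_x(y_i)$, so the resistances add in series within each $T_i$ and combine in parallel across the $T_i$'s — or argue directly from the Dirichlet variational characterization, noting that a minimiser of $\mathcal{E}_{D_x(y)}(f, f)$ subject to $f(y)=1$ and $f \equiv 0$ on the boundary is harmonic on each branch and that the variational problem decouples across branches because the branches share only the vertex $y$ where $f$ is already pinned. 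Either derivation is completely standard in the electrical-network literature, so I would only briefly indicate one of them rather than carry it out in detail.
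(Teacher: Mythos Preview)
Your proof is correct and follows essentially the same route as the paper's: decompose $D_x(y)$ at the root into the branches $\{y\}\cup D_x(y_i)$, identify the boundary piece in each branch, and combine effective resistances. The only difference is packaging --- you invoke the series--parallel laws as a named standard fact, whereas the paper carries out exactly that computation by hand via the two-sided variational estimate (the lower bound using $(1-t)^2+t^2 I\ge I/(1+I)$, the upper bound via an explicit glued test function); this is precisely the Dirichlet-form derivation you allude to in your final paragraph.
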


\begin{proof}
Let $f : D_{x}(y) \to \mathbb{R}$ such that
$f(y) = 1$ and $f = 0$ on $D_{x}(y) \setminus B_{D_{x}(y)}(y, n+1)$.
Then,
$f = 0$ on $D_{x}(y_{i}) \setminus B_{D_{x}(y_{i})}(y, n)$ 
for any $1 \le i \le \deg_{D_{x}(y)}(y)$.
Hence,
\begin{align*}
\mathcal{E}_{D_{x}(y)}(f, f) 
&= \sum_{i = 1}^{\deg_{D_{x}(y)}(y)} (1-f(y_{i}))^{2} + \mathcal{E}_{D_{x}(y_{i})}(f, f)\\
&\ge \sum_{i = 1}^{\deg_{D_{x}(y)}(y)} (1-f(y_{i}))^{2} + f(y_{i})^{2} I_{x}(y_{i}, n) \ge  \sum_{i = 1}^{\deg_{D_{x}(y)}(y)} \frac{I_{x}(y_{i}, n)}{1+I_{x}(y_{i}, n)}.
\end{align*}

Thus we see that
\[ I_{x}(y, n+1) \ge \sum_{i=1}^{\deg_{D_{x}(y)}(y)}
\frac{ I_{x}(y_{i}, n)}{1+  I_{x}(y_{i}, n)}.\]

Let $f_{i} : D_{x}(y_{i}) \to \mathbb{R}$ be a function such that $f_{i}(y_{i}) = 1$ and $f_{i} = 0$ 
on $ D_{x}(y_{i}) \setminus B_{ D_{x}(y_{i}) } (y_{i}, n)$, $1 \le i \le \deg_{D_{x}(y)}(y)$. 
Let $f  : D_{x}(y) \to \mathbb{R}$ be the function defined by $f(y) = 1$ and $f = f_{i}/(1+ \mathcal{E}_{D_{x}(y_{i})}(f_{i}, f_{i}))^{1/2}$ on $D_{x}(y_{i})$.
Then, $f = 0$ on $D_{x}(y) \setminus B_{D_{x}(y)}(y, n)$
and,
\[ I_{x}(y, n+1) \le \mathcal{E}_{D_{x}(y)}(f, f) = \sum_{i=1}^{\deg_{D_{x}(y)}(y)} \frac{\mathcal{E}_{D_{x}(y_{i})}(f_{i}, f_{i})}{1+\mathcal{E}_{D_{x}(y_{i})}(f_{i}, f_{i})}.\]

Since each $f_{i}$ is taken arbitrarily,
we have
\[
I_{x}(y, n+1) \le \sum_{i=1}^{\deg_{D_{x}(y)}(y)}
\frac{ I_{x}(y_{i}, n)}{1+  I_{x}(y_{i}, n)}. 
\]
These complete the proof of Lemma 3.1.
\end{proof}

\begin{Lem}
Let $3 \le N_{1} < N_{2}$.
Let $X$ be an infinite tree such that $\deg(x) \in [N_{1},N_{2}]$ for any $x \in X$.
Then, 
$N_{1}-2 \le I_{x}(y, n) \le N_{2}$
for any $x, y \in X$ and any $n \ge 1$.
\end{Lem}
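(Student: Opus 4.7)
The plan is a straightforward induction on $n$, using the recursion supplied by Lemma 3.1 together with the monotonicity of the map $\phi(t) = t/(1+t)$ on $(-1, \infty)$.

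For the base case $n = 1$, I would observe that under the open-ball convention $B_{D_x(y)}(y, 1) = \{y\}$, so the infimum defining $\rho_{D_x(y)}(y,1)$ is attained by the function equal to $1$ at $y$ and $0$ on every neighbour of $y$ in $D_x(y)$. This gives $I_x(y,1) = \deg_{D_x(y)}(y)$. A brief case split shows $\deg_{D_x(y)}(y) = \deg_X(x) \in [N_1, N_2]$ when $y = x$, and $\deg_{D_x(y)}(y) = \deg_X(y) - 1 \in [N_1 - 1, N_2 - 1]$ when $y \neq x$ (one edge incident to $y$ leaves $D_x(y)$). Either way, $I_x(y,1) \in [N_1 - 1, N_2] \subset [N_1 - 2, N_2]$.

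For the inductive step, assume $N_1 - 2 \le I_x(z, n) \le N_2$ for every pair $x, z$. By Lemma 3.1,
\[ I_x(y, n+1) = \sum_{i=1}^{k_y} \frac{I_x(y_i, n)}{1 + I_x(y_i, n)}, \]
with $k_y = \deg_{D_x(y)}(y) \in [N_1 - 1, N_2]$ by the same casework as above. Since $\phi$ is strictly increasing, the inductive hypothesis yields
\[ k_y \cdot \frac{N_1 - 2}{N_1 - 1} \;\le\; I_x(y, n+1) \;\le\; k_y \cdot \frac{N_2}{N_2 + 1}. \]
Plugging $k_y \le N_2$ on the right gives $I_x(y, n+1) \le N_2^2/(N_2+1) < N_2$, and plugging $k_y \ge N_1 - 1$ on the left gives $I_x(y, n+1) \ge (N_1 - 1) \cdot (N_1 - 2)/(N_1 - 1) = N_1 - 2$, which closes the induction.

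There is no real obstacle; the content of the lemma is simply that the constant $N_1 - 2$ has been calibrated so that the induction is exactly self-sustaining at the worst-case branching $k_y = N_1 - 1$, since then the cancellation $(N_1 - 1) \cdot (N_1 - 2)/(N_1 - 1) = N_1 - 2$ is sharp. The hypothesis $N_1 \ge 3$ enters only to guarantee $N_1 - 2 \ge 1 > 0$, so that $\phi$ is applied in its strictly increasing regime and the inductive lower bound remains positive. The stronger separation $N_2 < (N_1 - 1)^2$ assumed in the sketch of Theorem 1.3 plays no role in this lemma.
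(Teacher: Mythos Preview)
Your proof is correct and follows the same induction as the paper. The only difference is in the upper bound of the inductive step: the paper simply observes that $I_x(y,n)$ is nonincreasing in $n$ (since $\rho_{D_x(y)}(y,n)$ is nondecreasing), so $I_x(y,n+1)\le I_x(y,n)\le N_2$, whereas you feed the inductive hypothesis through the recursion and get the slightly sharper $I_x(y,n+1)\le N_2^2/(N_2+1)<N_2$; either way the argument closes, and your remarks on the role of $N_1\ge 3$ and the irrelevance of $N_2<(N_1-1)^2$ here are accurate.
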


\begin{proof}
We show this assertion by induction on $n$.
If $n = 1$,
then, by noting the definition of $D_{x}(y)$ and $I_{x}(y,1)$,  
$I_{x}(y, 1) = \deg_{D_{x}(y)}(y) \in [N_{1}-1, N_{2}]$. 
Thus the assertion holds.

We assume that $N_{1}-2 \le I_{x}(y, n) \le N_{2}$ for any $x, y \in X$.

Let $x, y \in X$.
Since $I_{x}(y, n+1) \le I_{x}(y, n)$,
we have
$I_{x}(y, n+1) \le N_{2}$.
Let $y_{i}$, $1 \le i \le \deg_{D_{x}(y)}(y)$,
be the neighborhoods of $y$ in $D_{x}(y)$.
By noting Lemma 3.1 and the assumption of induction,
\[ I_{x}(y, n+1) 
= \sum_{i=1}^{\deg_{D_{x}(y)}(y)} \frac{I_{x}(y_{i}, n)}{I_{x}(y_{i}, n) + 1} 
\ge \deg_{D_{x}(y)}(y)\frac{N_{1}-2}{N_{1}-1} 
\ge N_{1}-2.\]

These complete the proof of Lemma 3.2.
\end{proof}

\begin{Lem}
Let $3 \le N_{1} < N_{2} < (N_{1} - 1)^{2}$.
Let $X$ be an infinite tree such that $\deg(x) \in [N_{1},N_{2}]$ for any $x \in X$.
Then, $X$ satisfies $(U)$.
\end{Lem}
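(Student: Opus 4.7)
My plan is to reduce the uniform convergence of $\rho(x,n)$ to the uniform convergence (in both indices simultaneously) of $I_{x}(y,n)$, then obtain the latter as a geometric contraction derived from Lemma 3.1 and the bounds of Lemma 3.2. The main ingredient is that $g(t) := t/(1+t)$ has derivative $g'(t) = (1+t)^{-2}$, which is small on the interval $[N_{1}-2, N_{2}]$ in which all $I_{x}(y, n)$ live by Lemma 3.2, and the hypothesis $N_{2} < (N_{1}-1)^{2}$ is precisely what makes this smallness beat the factor of $N_{2}$ coming from the degree bound.

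To implement this, first observe that $n \mapsto I_{x}(y,n)$ is non-increasing: from the variational definition, the admissible class of test functions $\{f : f(y)=1, \ f = 0 \text{ on } D_{x}(y) \setminus B_{D_{x}(y)}(y,n)\}$ grows with $n$, so its infimum shrinks. Hence the quantity
\[ \Delta_{n} \;:=\; \sup_{x, y \in X} \bigl( I_{x}(y,n) - I_{x}(y,n+1) \bigr) \]
is nonnegative and finite (bounded by $N_{2} - (N_{1}-2)$, by Lemma 3.2). Applying Lemma 3.1 at levels $n+1$ and $n+2$ and subtracting,
\[ I_{x}(y,n+1) - I_{x}(y,n+2) = \sum_{i=1}^{\deg_{D_{x}(y)}(y)} \bigl[ g(I_{x}(y_{i},n)) - g(I_{x}(y_{i},n+1)) \bigr]. \]
The mean value theorem combined with Lemma 3.2 bounds each summand by $(N_{1}-1)^{-2} (I_{x}(y_{i},n) - I_{x}(y_{i},n+1))$. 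Since $\deg_{D_{x}(y)}(y) \le N_{2}$, summing and taking the supremum over $(x,y)$ yields the one-step contraction $\Delta_{n+1} \le \lambda \Delta_{n}$ with $\lambda := N_{2}/(N_{1}-1)^{2} < 1$.

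Iteration gives geometric decay of $\Delta_{n}$, which shows that $\{I_{x}(y,n)\}_{n}$ is Cauchy uniformly in $(x,y)$ with tail bound $\sup_{x,y}|I_{x}(y,n) - \lim_{m} I_{x}(y,m)| \le \Delta_{n}/(1-\lambda)$. Specializing to $y=x$ and using the Lipschitz estimate $|1/a - 1/b| \le |a-b|$ valid on $[1, \infty) \supseteq [N_{1}-2, \infty)$ (since $N_{1} \ge 3$), I conclude that $\rho(x,n) = 1/I_{x}(x,n)$ converges uniformly in $x$ to $\rho(x)$, so $(X, \mu)$ satisfies $(U)$. The one subtle point in the argument is that the recursion of Lemma 3.1 moves from a vertex $y$ to its children in $D_{x}(y)$, which a priori lie in different subtrees; but since $\Delta_{n}$ is defined as a supremum over \emph{all} pairs $(x,y) \in X \times X$, the supremum feeds back into itself, and this is the only place in the argument that really uses the uniformity over the root.
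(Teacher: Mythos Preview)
Your argument is correct and follows essentially the same route as the paper: both prove a one-step contraction of the form
\[
\sup_{y}\bigl(I_{x}(y,n)-I_{x}(y,n+k)\bigr)\ \le\ \frac{N_{2}}{(N_{1}-1)^{2}}\,\sup_{z}\bigl(I_{x}(z,n-1)-I_{x}(z,n+k-1)\bigr)
\]
from Lemma~3.1 together with the bound $I_{x}(y,n)\ge N_{1}-2$ of Lemma~3.2, iterate to get geometric decay, and then pass from uniform convergence of $I_{x}(x,n)=\rho(x,n)^{-1}$ to that of $\rho(x,n)$ via the lower bound $I_{x}(x,n)\ge N_{1}-2\ge 1$. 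The only cosmetic differences are that you track consecutive differences $\Delta_{n}$ and sum a geometric series while the paper works directly with the increment over an arbitrary gap $k$, and your final Lipschitz constant $1$ is cruder than the paper's $(N_{1}-2)^{-2}$; neither affects the argument.
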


\begin{proof}
By using Lemma 3.1 and Lemma 3.2,
we have that for any $n, k \ge 1$ and
any $x, y \in X$, 
\begin{align*} 
I_{x}(y, n+1) - I_{x}(y, n+k+1) 
&= \sum_{i=1}^{\deg_{D_{x}(y)}(y)} \frac{I_{x}(y_{i}, n)}{I_{x}(y_{i}, n)+1} - \frac{I_{x}(y_{i}, n+k)}{I_{x}(y_{i}, n+k) + 1}\\
&\le \frac{N_{2}}{(N_{1}-1)^{2}} 
\sup_{z \in X} \left(I_{x}(z, n) - I_{x}(z, n+k)\right). 
\end{align*}
Here $y_{i}$, $1 \le i \le \deg_{D_{x}(y)}(y)$, be the neighborhoods of $y$
in $D_{x}(y)$.

Repeating this argument, we have that for any $n, k \ge 1$,
\begin{align*} 
%\rho_{X}(x,n)^{-1}- \rho_{X}(x,n+k)^{-1} &=  
I_{x}(x, n) - I_{x}(x, n+k) 
&\le \left(\frac{N_{2}}{(N_{1}-1)^{2}}\right)^{n-1} 
\sup_{z \in X} (I_{x}(z, 1) - I_{x}(z, k+1))\\
&\le \left(\frac{N_{2}}{(N_{1}-1)^{2}}\right)^{n-1} N_{2}. 
\end{align*}
Since $N_{2} < (N_{1}-1)^{2}$, 
$\rho_{X}(x,n)^{-1}$ converges uniformly to $\rho_{X}(x)^{-1}$, $n \to \infty$.

By Lemma 3.2,
$\rho_{X}(x, n)^{-1} \ge N_{1}-2$ for any $n \ge 1$ 
and hence $\rho_{X}(x)^{-1} \ge N_{1}-2$.
Therefore, 
\begin{align*} 
\rho_{X}(x) - \rho_{X}(x,n) &= \rho_{X}(x)\rho_{X}(x,n)(\rho_{X}(x,n)^{-1} - \rho_{X}(x)^{-1} ) \\
&\le \frac{\rho_{X}(x,n)^{-1} - \rho_{X}(x)^{-1}}{(N_{1}-2)^{2}}.  
\end{align*} 
Hence
$\rho_{X}(x,n)$ converges uniformly to $\rho_{X}(x)$, $n \to \infty$.
This completes the proof of Lemma 3.3.
\end{proof}

Let $N \ge 3$. 
Let $T_{N}$ be the infinite $N$-regular tree. 
Let $\tilde T_{N}(o)$ be the infinite tree $T$ such that $\deg(o) = N-1$ for $o \in T$ and $\deg(x) = N$ for any $x \in T \setminus \{o\}$.
For the simple random walk on $T_{N}$,
we let $g_{N} = P_{x}(T_{x}^{+} = +\infty)$
and $g_{N}(n) = P_{x}(T_{x}^{+} > n)$ for some (or any) $x \in T_{N}$.

\begin{Def}
Let $Y$ be a finite tree.
Let $E(Y) = \{y \in Y : \deg(y) = 1\}$.
Let $N \ge 3$.
We define an infinite tree $Y_{N}$ as follows : 
We prepare $Y$ and $|E(Y)|$ copies of $\tilde T_{N}(o)$.
Let $Y_{N}$ be the infinite tree obtained by attaching $o \in \tilde T_{N}(o)$ to each $y \in E(Y)$.
\end{Def}

\begin{Lem}
Let $N \ge 3$. 
Let $Y$ be a finite tree with a reference point $o$ 
such that $\deg(y) \ge 3$ for any $y \in Y \setminus E(Y)$.
Let $Y_{N}$ be the infinite tree in Definition 3.4.
We assume that $Y_{N}$ satisfies $(U)$.
Let $R_{n}$ be the range of the simple random walk up to time $n-1$ on $Y_{N}$.
Then, 
\[ \lim_{n \to \infty} \frac{E_{o}[R_{n}]}{n} = g_{N}.\]
\end{Lem}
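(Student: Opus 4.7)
The plan is to reduce $E_{o}[R_n]$ via a last-exit decomposition to a Cesàro average of return probabilities, and then to exploit that, almost surely, the walker spends most of its time deep inside one of the attached $\tilde T_{N}(o)$ subtrees, where the local geometry is identical to that of $T_{N}$.

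First I would write, by the Markov property applied to the last-exit decomposition used in the proof of Theorem 1.2,
\[ E_{o}[R_n] = \sum_{i=0}^{n-1} E_{o}\!\left[P_{S_i}(T_{S_i}^{+} > n-1-i)\right]. \]
Set $g(x) := P_{x}(T_{x}^{+} = \infty)$. Since $Y_N$ satisfies $(U)$, Lemma 2.1 gives $\sup_{x} |P_{x}(T_{x}^{+} > M) - g(x)| \to 0$ as $M \to \infty$, so splitting the sum at index $n-K$ for a large $K = K(\epsilon)$ lets me replace $P_{S_i}(T_{S_i}^{+} > n-1-i)$ by $g(S_i)$ at a cost of $o(n)$. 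The task then reduces to proving $n^{-1}\sum_{i=0}^{n-1} E_{o}[g(S_i)] \to g_{N}$.

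The heart of the proof is to establish that $g(x) \to g_{N}$ as $d(x,Y) \to \infty$. Every $x \in Y_N$ with $d(x,Y) \ge 1$ has degree $N$, so by the identity $\rho(z)^{-1} = \mu_{z} P_{z}(T_{z}^{+} = \infty)$ used in Section 2, $g(x) = 1/(N\rho_{Y_N}(x))$ and $g_{N} = 1/(N\rho_{T_N}(o'))$ for any $o' \in T_N$. For $x$ with $d(x,Y) > L$, the ball $B_{Y_N}(x, L)$ is isomorphic to $B_{T_N}(o', L)$ as an unweighted graph, so $\rho_{Y_N}(x, L) = \rho_{T_N}(o', L)$. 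I would then bound
\[ \bigl|\rho_{Y_N}(x) - \rho_{T_N}(o')\bigr| \le \bigl(\rho_{Y_N}(x) - \rho_{Y_N}(x, L)\bigr) + \bigl(\rho_{T_N}(o') - \rho_{T_N}(o', L)\bigr), \]
and make both summands arbitrarily small for $L$ large: the first uniformly in $x$ by $(U)$ for $Y_N$, and the second by transience of $T_N$ (which holds since $N \ge 3$).

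Finally, $\rho_{Y_N}(o) < \infty$ implies the walk on $Y_N$ is transient, hence $d(S_i, Y) \to \infty$ $P_{o}$-almost surely because $Y$ is finite. Combined with the previous paragraph and bounded convergence, $E_{o}[g(S_i)] \to g_{N}$, and Cesàro's lemma together with the decomposition yields $E_{o}[R_n]/n \to g_{N}$. I expect the main obstacle to be the middle step: combining the uniformity granted by $(U)$ with the isomorphism of deep balls to control $|g(x) - g_{N}|$ genuinely uniformly as $d(x,Y)\to\infty$.
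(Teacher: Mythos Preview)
Your proposal is correct and follows essentially the same architecture as the paper's proof: last-exit decomposition, then Lemma~2.1 to replace $P_{S_i}(T_{S_i}^{+}>n-1-i)$ by $g(S_i)$ up to $o(n)$, then the observation that deep balls in $Y_N$ are isomorphic to balls in $T_N$ so that $g(x)\to g_N$ as $d(x,Y)\to\infty$, and finally transience of $Y_N$ to ensure the walk escapes $Y$. The only cosmetic differences are that the paper compares $P_y(T_y^{+}>m)$ with $g_N(m)$ directly rather than going through $\rho_{Y_N}(x,L)=\rho_{T_N}(o',L)$, and it argues transience via rough isometry to $T_3$ whereas you invoke $\rho_{Y_N}(o)<\infty$ (which you should justify in one line, e.g.\ by noting $Y_N$ contains a copy of $\tilde T_N(o)$).
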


\begin{proof}
%By a last exit decomposition,
By considering a last exit decomposition as in the proof of Theorem 1.2, 
\begin{align*} 
E_{o}[R_{n}]
&=  1 + \sum_{i=0}^{n-2} P_{o}\left(S_{i} \ne S_{j} \text{ for any }  j \in \{i+1, \dots, n-1\}\right)  \\
&=  1 + \sum_{i=0}^{n-2} \sum_{y \in Y_{N}} P_{o}(S_{i} = y)P_{y}(T_{y}^{+} > n-1-i)\\
&=  1 + \sum_{i=0}^{n-2} \sum_{y \in Y_{N}} P_{o}(S_{i} = y)P_{y}(n-1-i < T_{y}^{+} < +\infty) \\
& \, \, \, \, + \sum_{i=0}^{n-2} \sum_{y \in Y_{N}} P_{o}(S_{i} = y)P_{y}(T_{y}^{+} = +\infty).
\end{align*}

Since $Y_{N}$ satisfies $(U)$,
\[ \frac{1}{n} \sum_{i=0}^{n-2} \sum_{y \in Y_{N}} P_{o}(S_{i} = y)P_{y}(n-1-i < T_{y}^{+} < +\infty) \]
\[ \le \frac{1}{n} \sum_{i=0}^{n-2} \sup_{y \in Y_{N}} P_{y}(n-1-i < T_{y}^{+} < +\infty) \to 0, \, n \to \infty.\]

Hence it is sufficient to show that
\[ \lim_{n \to \infty} \frac{1}{n} \sum_{i=0}^{n-2} \sum_{y \in Y_{N}} P_{o}(S_{i} = y)P_{y}(T_{y}^{+} = +\infty) = g_{N}. \tag{3.1} \]

By the assumption,
$Y_{N}$ is an infinite tree such that
$\deg(y) \ge 3$ for any $y \in Y_{N}$
and
$\sup_{y \in Y_{N}} \deg(y) < +\infty$.
Then, by Woess \cite{W} Example 3.8,
$Y_{N}$ is roughly isometric to the $3$-regular tree $T_{3}$.
Therefore $Y_{N}$ is a transient graph.

Let $x, y \in Y_{N}$.
Since $P_{x}(S_{i} = y)/\deg(y) = P_{y}(S_{i} = x)/\deg(x)$, 
\[ \frac{\deg(x)}{\deg(y)} P_{x}(S_{i} = y)^{2} = P_{x}(S_{i} = y)P_{y}(S_{i} = x) \le P_{x}(S_{2i} = x). \]
Therefore, 
\begin{equation}
P_{x}(S_{i} = y) \to 0, \,  i \to \infty, \text{ for any } x, y \in Y_{N}. \tag{3.2}
\end{equation}

Let $\epsilon > 0$.
Then,
there exists a positive integer $m_{0}$
such that
$g_{N}(m_{0}) \le g_{N} + \epsilon/2$.
By using the definition of $Y_{N}$ and 
that the distribution of the random walk up to time $n-1$ starting at $y \in Y_{N}$ is determined by $B_{Y_{N}}(y, n)$, 
we have that 
\[ P_{y}(T_{y}^{+} > m_{0}) = g_{N}(m_{0})  \, \text{ for any } y \in Y_{N} \setminus B(o, 2(\text{diam}(Y) + m_{0})).  \]

Hence
$P_{y}(T_{y}^{+} = +\infty) \le g_{N} + \epsilon/2$ 
for any $y \in Y_{N} \setminus B(o, 2(\text{diam}(Y) + m_{0}))$.

By (3.2), we have that
$P_{o}(S_{i} \in B(o, 2(\text{diam}(Y) + m_{0}))) \to 0$,
$i \to \infty$.
Hence there exists a positive integer $n_{0}$ 
such that 
$P_{o}(S_{i} \in B(o, 2(\text{diam}(Y) + m_{0}))) \le \epsilon/2$ for any $i \ge n_{0}$.
Then, for any $n > n_{0}$,

\[ \frac{1}{n} \sum_{i = n_{0}}^{n-1} \sum_{y \in Y_{N}}P_{o}(S_{i} = y)P_{y}(T_{y}^{+} = +\infty)\]
\[ \le \frac{1}{n} \sum_{i = n_{0}}^{n-1} 
P_{o}(S_{i} \in  B(o, 2(\text{diam}(Y) + m_{0}))) +\sup_{y \notin B(o, 2(\text{diam}(Y) + m_{0}))} P_{y}(T_{y}^{+} = +\infty). \]
\[ \le \frac{n-n_{0}}{n} \frac{\epsilon}{2} + g_{N} + \frac{\epsilon}{2} \le g_{N} + \epsilon. \]

We remark that  
\[ \frac{1}{n} \sum_{i = 0}^{n_{0}-1} \sum_{y \in Y_{N}}P_{o}(S_{i} = y)P_{y}(T_{y}^{+} = +\infty) \le \frac{n_{0}}{n} \to 0, \, n \to \infty.\]

Since $\epsilon > 0$ is taken arbitrarily,
we see that
\[ \limsup_{n \to \infty} \frac{1}{n} \sum_{i = 0}^{n-1} \sum_{y \in Y_{N}}P_{o}(S_{i} = y)P_{y}(T_{y}^{+} = +\infty) \le g_{N}.\tag{3.3} \]

Let $\epsilon > 0$. 
Since $Y_{N}$ satisfies $(U)$,
there exists a positive integer $m_{1}$
such that $\sup_{y \in Y_{N}}P_{y}(m_{1} < T_{y}^{+} < +\infty) \le \epsilon$.
We have that for any $y \in Y_{N} \setminus B(o, 2(\text{diam}(Y) + m_{1})$, 
$P_{y}(T_{y}^{+} > m_{1}) = g_{N}(m_{1})$.
Hence,
\[ g_{N} \le g_{N}(m_{1}) = P_{y}(T_{y}^{+} = +\infty) + P_{y}(m_{1} < T_{y}^{+} < +\infty) \le P_{y}(T_{y}^{+} = +\infty) + \epsilon \]
for any $y \in Y_{N} \setminus B(o, 2(\text{diam}(Y) + m_{1}))$. 

By (3.2), 
there exists a positive integer $n_{1}$ 
such that 
$P_{o}(S_{i} \in B(o, 2(\text{diam}(Y) + m_{1}))) \le \epsilon$, for any $i \ge n_{1}$.
Then, 
\[  \frac{1}{n} \sum_{i = 0}^{n-1} \sum_{y \in Y_{N}}P_{o}(S_{i} = y)P_{y}(T_{y}^{+} = +\infty) \]
\[ \ge \frac{1}{n} \sum_{i = n_{1}}^{n-1} \sum_{y \notin B(o, 2(\text{diam}(Y) + m_{1}))}P_{o}(S_{i} = y)P_{y}(T_{y}^{+} = +\infty) 
\ge \frac{n-n_{1}}{n}(1-\epsilon) (g_{N} - \epsilon). \]

By letting $n \to \infty$ and recalling that $\epsilon > 0$ is taken arbitrarily,
\begin{equation} 
\liminf_{n \to \infty} \frac{1}{n} \sum_{i = 0}^{n-1} \sum_{y \in Y_{N}}P_{o}(S_{i} = y)P_{y}(T_{y}^{+} = +\infty) \ge g_{N}. \tag{3.4}
\end{equation}

(3.3) and (3.4) imply (3.1).
\end{proof}

\begin{proof}[Proof of Theorem 1.3]
First, we will construct an increasing sequence of finite trees $(X^{(n)})_{n}$ by induction on $n$.
Second, we will show that the limit infinite graph $X$ of $(X^{(n)})_{n}$ satisfies $(U)$, $F_{1} < F_{2}$ and (1.3).

Let $3 \le N_{1} < N_{2} < (N_{1}-1)^{2}$.
Let $X^{(1)}$ be a finite tree 
such that
$\deg(x) = N_{1}$ for any $x \in X^{(1)} \setminus E(X^{(1)})$
and
$X^{(1)} = B(o, k_{1})$ for a point $o \in X^{(1)}$ and a positive integer $k_{1}$.

We assume that $X^{(2n-1)}$ is constructed and 
$X^{(2n-1)} = B_{X^{(2n-1)}}(o, k_{2n-1})$ 
for a positive integer $k_{2n-1}$.
By Lemma 3.5,
there exists $k_{2n} > 2k_{2n-1}$
such that
for the simple random walk on $(X^{(2n-1)})_{N_{2}}$ starting at $o$, 
\begin{equation}
\frac{E_{o}[R_{k_{2n}}]}{k_{2n}} \ge g_{N_{2}} - \frac{1}{n}.\tag{3.5}
\end{equation}

Then we let $X^{(2n)} = (X^{(2n-1)})_{N_{2}} \cap B_{(X^{(2n-1)})_{N_{2}}}(o, k_{2n})$.

We assume that $X^{(2n)}$ is constructed and 
$X^{(2n)} = B_{X^{(2n)}}(o, k_{2n})$ 
for a positive integer $k_{2n}$.
By Lemma 3.5,
there exists $k_{2n+1} > 2k_{2n}$
such that
for the simple random walk on $(X^{(2n)})_{N_{1}}$ starting at $o$, 
\begin{equation}
\frac{E_{o}[R_{k_{2n+1}}]}{k_{2n+1}} \le g_{N_{1}} + \frac{1}{n}.\tag{3.6}
\end{equation}

Then we let $X^{(2n+1)} = (X^{(2n)})_{N_{1}} \cap B_{(X^{(2n)})_{N_{1}}}(o, k_{2n+1})$.

Let $X$ be the infinite graph obtained by the limit of a sequence of $(X^{(n)})$.
Then $\deg_{X}(x) \in \{N_{1}, N_{2}\}$ and  by Lemma 3.3 $X$ satisfies $(U)$.

Now we show (1.3). 
We remark that the distribution of the simple random walk up to time $k-1$ on $X$ starting at $o$ is determined by $B_{X}(o, k)$, $k \ge 1$.
By the definition of $X$, 
(3.5) and (3.6) hold also for the simple random walk on $X$.
Hence, 
\[ \liminf_{n \to \infty} \frac{E_{o}[R_{n}]}{n} \le g_{N_{1}}, 
\mathrm{ and, }
\limsup_{n \to \infty} \frac{E_{o}[R_{n}]}{n} \ge g_{N_{2}}. \tag{3.7}\]

By considering a last exit decomposition as in the proof of Theorem 1.2, 
and,  
noting that $X$ satisfies $(U)$,
we have 
\[ 1 - F_{2} 
= \inf_{x \in X} P_{x}(T_{x}^{+} = +\infty) 
\le \liminf_{n \to \infty} \frac{E_{o}[R_{n}]}{n}, \tag{3.8}\]
and, 
\[ \limsup_{n \to \infty} \frac{E_{o}[R_{n}]}{n} 
\le \sup_{x \in X} P_{x}(T_{x}^{+} = +\infty)
= 1 - F_{1}. \tag{3.9}\]

In order to see (1.3), 
it is sufficient to show that for any  $x \in X$, 
\[ g_{N_{1}} \le  P_{x}(T_{x}^{+} = +\infty) \le g_{N_{2}}. \tag{3.10} \]
 
Let $x \in X$. 
We recall that $\deg_{X}(x) = N_{1}$ or $\deg_{X}(x) = N_{2}$.
Then we can assume 
that $T_{N_{1}}$ is a subtree of $X$ 
and 
$X$ is a subtree of $T_{N_{2}}$ 
and 
$x \in T_{N_{1}}$.

Assume $\deg_{X}(x) = N_{1}$.  
By using \cite{Kum} Theorem 1.16 and that $T_{N_{1}}$ is a subtree of $X$, 
we have  
\[ g_{N_{1}} 
= N_{1}^{-1} \rho_{T_{N_{1}}}(x)^{-1} 
\le N_{1}^{-1} \rho_{X}(x)^{-1} 
= P_{x}(T_{x}^{+} = +\infty).\]

Let $x_{i}$, $1 \le i \le N_{2}$, be the neighborhoods of $x$ in $T_{N_{2}}$ and $x_{i} \in T_{N_{1}}$ for $1 \le i \le N_{1}$.
Let $f : T_{N_{2}} \to \mathbb{R}$ be a function 
such that $f(x) = 1$ and 
it has a compact support in $T_{N_{2}}$.
Then, $f$ has compact support also in $D_{x}(x_{i})$ for each $i$.
Here $D_{x}(x_{i})$ is defined in $T_{N_{2}}$.
Then, 
\begin{align*} 
\mathcal{E}_{T_{N_{2}}}(f, f) - \mathcal{E}_{X}(f, f)
&\ge \sum_{i=N_{1} + 1}^{N_{2}} (1-f(x_{i}))^{2}  + \mathcal{E}_{D_{x}(x_{i})}(f,f)\\
&\ge \sum_{i=N_{1} + 1}^{N_{2}} (1-f(x_{i}))^{2}  + f(x_{i})^{2}
\rho_{D_{x}(x_{i})}(x_{i})^{-1} \\
&\ge \sum_{i=N_{1} + 1}^{N_{2}} \frac{\rho_{D_{x}(x_{i})}(x_{i})^{-1}}{1+\rho_{D_{x}(x_{i})}(x_{i})^{-1}}.
\end{align*}

Since $D_{x}(x_{i})$ is graph isomorphic to $\tilde T_{N_{2}}(o)$,
$\rho_{D_{x}(x_{i})}(x_{i}) = \rho_{\tilde T_{N_{2}}(o)}(o)$.
Hence, 
\[ \mathcal{E}_{T_{N_{2}}}(f, f) - \mathcal{E}_{X}(f, f) 
\ge (N_{2} - N_{1}) \frac{\rho_{\tilde T_{N_{2}}(o)}(o)^{-1}}{1+ \rho_{\tilde T_{N_{2}}(o)}(o)^{-1}}.\]

Since $f|_{X}(x) = 1$ and $f|_{X}$ has compact support on $X$, 
\[  \mathcal{E}_{T_{N_{2}}}(f, f) - \rho_{X}(x)^{-1} 
\ge (N_{2} - N_{1}) \frac{\rho_{\tilde T_{N_{2}}(o)}(o)^{-1}}{1+ \rho_{\tilde T_{N_{2}}(o)}(o)^{-1}}.\]

Since $f$ is taken arbitrarily,   
\[ \rho_{T_{N_{2}}}(x)^{-1} - \rho_{X}(x)^{-1} 
\ge (N_{2} - N_{1}) \frac{\rho_{\tilde T_{N_{2}}(o)}(o)^{-1}}{1+ \rho_{\tilde T_{N_{2}}(o)}(o)^{-1}}.\]

We see that
$\rho_{T_{N_{2}}}(x)^{-1} = N_{2} \dfrac{\rho_{\tilde T_{N_{2}}(o)}(o)^{-1}}{1+ \rho_{\tilde T_{N_{2}}(o)}(o)^{-1}}$
in the same manner as in the proof of Lemma 3.1.
Hence, $N_{1} \rho_{T_{N_{2}}}(x)^{-1} \ge N_{2} \rho_{X}(x)^{-1}$. 
By using \cite{Kum} Theorem 1.16,  
we see that 
\[ P_{x}(T_{x}^{+} = +\infty) = N_{1}^{-1}\rho_{X}(x)^{-1}
\le N_{2}^{-1}\rho_{T_{N_{2}}}(x)^{-1}= g_{N_{2}}. \]

Assume $\deg_{X}(x) = N_{2}$. 
We can show (3.10) in the same manner as above and sketch the proof.  

By using \cite{Kum} Theorem 1.16 and that $X$ is a subtree of $T_{N_{2}}$, 
we have  
\[ P_{x}(T_{x}^{+} = +\infty) 
= N_{2}^{-1} \rho_{X}(x)^{-1} 
\le N_{2}^{-1} \rho_{T_{N_{2}}}(x)^{-1}
= g_{N_{2}}. \]

Let $x_{i}$, $1 \le i \le N_{2}$, be the neighborhoods of $x$ in $X$ 
and $x_{i} \in T_{N_{1}}$ for $1 \le i \le N_{1}$.
Let $f : X \to \mathbb{R}$ be a function 
such that $f(x) = 1$ and 
it has a compact support in $X$.
Then, $f$ has compact support also in $D_{x}(x_{i})$ for each $i$.
Here $D_{x}(x_{i})$ is defined in $X$.
Then, 
\begin{align*} 
\mathcal{E}_{X}(f, f) - \mathcal{E}_{T_{N_{1}}}(f, f)
&\ge \sum_{i=N_{1} + 1}^{N_{2}} (1-f(x_{i}))^{2}  + \mathcal{E}_{D_{x}(x_{i})}(f,f)\\
%&\ge \sum_{i=N_{1} + 1}^{N_{2}} (1-f(x_{i}))^{2}  + f(x_{i})^{2}\rho_{D_{x}(x_{i})}(x_{i})^{-1} \\
&\ge \sum_{i=N_{1} + 1}^{N_{2}} \frac{\rho_{D_{x}(x_{i})}(x_{i})^{-1}}{1+\rho_{D_{x}(x_{i})}(x_{i})^{-1}}.
\end{align*}
We can regard $\tilde T_{N_{1}}(o)$ as a subtree of $D_{x}(x_{i})$ and 
can assume $x_{i} = o$. 
Hence $\rho_{D_{x}(x_{i})}(x_{i})^{-1} 
\ge \rho_{\tilde T_{N_{1}}(o)}(o)^{-1}$ and 
\[ \mathcal{E}_{X}(f, f) - \mathcal{E}_{T_{N_{1}}}(f, f) 
\ge (N_{2}-N_{1}) \frac{\rho_{\tilde T_{N_{1}}(o)}(o)^{-1}}{1+ \rho_{\tilde T_{N_{1}}(o)}(o)^{-1}}. \]

Therefore,
\[ \rho_{X}(x)^{-1} - \rho_{T_{N_{1}}}(x)^{-1} 
\ge (N_{2}-N_{1}) \frac{\rho_{\tilde T_{N_{1}}(o)}(o)^{-1}}{1+ \rho_{\tilde T_{N_{1}}(o)}(o)^{-1}} 
= (N_{2} - N_{1}) \frac{\rho_{T_{N_{1}}}(x)^{-1} }{N_{1}}. \]
and then we have $N_{1} \rho_{X}(x)^{-1}  \ge N_{2} \rho_{T_{N_{1}}}(x)^{-1}$.
By using \cite{Kum} Theorem 1.16,  
we see that 
\[ g_{N_{1}} 
= N_{1}^{-1}\rho_{T_{N_{1}}}(x)^{-1}
\le  N_{2}^{-1}\rho_{X}(x)^{-1} 
= P_{x}(T_{x}^{+} = +\infty). \]

Thus the proof of (3.10) completes and we obtain (1.3).

By using \cite{W} Lemma 1.24 and $N_{1} < N_{2}$, 
we see that 
$g_{N_{1}} = (N_{1}-2)/(N_{1}-1) < g_{N_{2}} = (N_{2}-2)/(N_{2}-1)$. 
By using (3.7), (3.8), (3.9) and (3.10),
we see $g_{N_{1}} = 1 - F_{2}$ and $g_{N_{2}} = 1 - F_{1}$.
Hence $F_{1} < F_{2}$. 

Thus we see that $X$ satisfies $(U)$, $F_{1} < F_{2}$, and, (1.3). 
\end{proof}

%%%%%%%%%%%%%%%%%%%%%%%%%%%%

\section{Examples of graphs satisfying the uniform condition}%%% REWRITTEN

In this section,
we give some examples of graphs satisfying $(U)$. 
We assume that all weights are equal to $1$. %added

Here we follow \cite{Kum} Definition 1.8 for the definition of rough isometry introduced by Kanai \cite{Kan1}.

\begin{Def}%$(A, B, M)$
Let $X_{i}$ be weighted graphs and $d_{i}$ be the graph metric of $X_{i}$, $i=1,2$. 
We say that a map $T : X_{1} \to X_{2}$ is a ($(A, B, M)$-)\textit{rough isometry} 
if there exist constants $A > 1$, $B > 0$, and, $M > 0$ satisfying the following inequalities.
\[ A^{-1}d_{1}(x,y) - B \le d_{2}(T(x), T(y)) \le Ad_{1}(x,y) + B, \, \, x, y \in X_{1}. \]
\[ d_{2}(T(X_{1}), z) \le M, \, \, z \in X_{2}.  \]
We say that $X_{1}$ is \textit{roughly isometric} to $X_{2}$ 
if there exists a rough isometry between them. 
We say that a property is \textit{stable under rough isometry} 
if whenever $X_{1}$ satisfies the property and is roughly isometric to $X_{2}$, then $X_{2}$ also satisfies the property.
\end{Def}

\subsection{Recurrent graphs}

\begin{Prop}%(U) is stable between recurrent graphs
The condition $(U)$ is stable under rough isometry between recurrent graphs.
\end{Prop}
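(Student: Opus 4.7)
The plan is to reduce $(U)$ on recurrent graphs to a uniform lower bound on $\rho(\cdot,n)$ and then to transfer that bound from $X_{1}$ to $X_{2}$ via a Dirichlet--energy comparison across the rough isometry. Recurrence is stable under rough isometry (cf.\ \cite{Kum}), so from recurrence of $X_{1}$ the graph $X_{2}$ is also recurrent, and in both graphs $\rho(\cdot)\equiv+\infty$. Condition $(U)$ is therefore equivalent, for both graphs, to $\inf_{x}\rho(x,n)\to\infty$ as $n\to\infty$, and my goal reduces to propagating this uniform divergence across a rough isometry $T\colon X_{1}\to X_{2}$.

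The key input I intend to use is a resistance comparison of the form
\[
\rho_{2}(y,n)\ \geq\ c_{1}\,\rho_{1}\bigl(S(y),\,\lfloor c_{2}n\rfloor-c_{3}\bigr)
\]
valid for every $y\in X_{2}$ and every sufficiently large $n$, where $S\colon X_{2}\to X_{1}$ is a quasi-inverse of $T$ and the constants $c_{1},c_{2},c_{3}>0$ depend only on the rough-isometry constants $(A,B,M)$ and on the uniform degree and edge-weight bounds. I would either cite this directly from \cite{Kum} or reprove it by the standard pushforward argument: let $f\colon X_{1}\to\mathbb{R}$ be an almost-optimal test function for $\rho_{1}(S(y),m)^{-1}$, so that $f(S(y))=1$, $f\equiv 0$ outside $B_{1}(S(y),m)$, and $\mathcal{E}_{X_{1}}(f,f)$ is close to $\rho_{1}(S(y),m)^{-1}$; set $g:=f\circ S$ on $X_{2}$. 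The lower half of the rough-isometry inequality for $S$ forces $g$ to vanish outside $B_{2}(y,A'(m+B'))$, while the upper half combined with bounded degrees gives $\mathcal{E}_{X_{2}}(g,g)\leq K\,\mathcal{E}_{X_{1}}(f,f)$ by interpolation along short $X_{1}$-paths and Cauchy--Schwarz. Taking infima yields the displayed comparison with $c_{2}=1/A'$ and appropriate $c_{1},c_{3}$.

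Given the comparison the conclusion is immediate: for arbitrary $L>0$, use $(U)$ on $X_{1}$ to pick $m$ with $\inf_{x}\rho_{1}(x,m)\geq L/c_{1}$, then choose $n_{0}$ so that $\lfloor c_{2}n\rfloor-c_{3}\geq m$ for all $n\geq n_{0}$; the comparison then gives $\rho_{2}(y,n)\geq L$ uniformly in $y\in X_{2}$ for every $n\geq n_{0}$, which is exactly $(U)$ for $X_{2}$. The main obstacle will be the resistance-comparison step itself: although its strategy is standard, one must bookkeep carefully how $X_{2}$-edges fan out under $S$ into short $X_{1}$-paths, so that the constant $K$ in $\mathcal{E}_{X_{2}}(g,g)\leq K\,\mathcal{E}_{X_{1}}(f,f)$ is genuinely uniform in $y$ and $n$, and confirm that the uniform degree/weight bounds make the whole construction independent of the chosen reference vertex.
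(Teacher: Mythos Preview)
Your proposal is correct and follows essentially the same route as the paper: reduce $(U)$ on recurrent graphs to $\inf_x \rho(x,n)\to\infty$, take a rough isometry from $X_2$ to $X_1$ (the paper does this directly via the equivalence-relation property rather than via a quasi-inverse), pull back a near-optimal test function by composition, and use the Dirichlet-energy comparison (which the paper cites as \cite{W} Theorem~3.10 rather than reproving) to obtain $\rho_{X_2}(x,n)\ge c\,\rho_{X_1}(T(x),A^{-1}n-B)$. The only differences are cosmetic.
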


\begin{proof}%This follows from Theorem 3.10 in \cite{W}. 
Assume that $X_{1}$ is a recurrent graph satisfying $(U)$ and $X_{2}$ is a (recurrent) graph which is roughly isometric to $X_{1}$.
We would like to show that $X_{2}$ satisfies $(U)$.

Since rough isometry is an equivalence relation,
there exists a $(A, B, M)$-rough isometry $T : X_{2} \to X_{1}$.  
Fix $n \in \mathbb{N}$ and $x \in X_{2}$.
Let $f$ be a function on $X_{1}$
such that $f(T(x)) = 1$ and $f = 0$ on $X_{1} \setminus B(T(x), A^{-1}n-B)$.
Since $T$ is a $(A, B, M)$-rough isometry,
we have that for any $y \in X_{2} \setminus B(x, n)$,
$T(y) \in X_{1} \setminus B(T(x), A^{-1}n - B)$, 
and hence, $f \circ T = 0$ on $X_{2} \setminus B(x, n)$. 

By using Theorem 3.10 in \cite{W},
we see that
there exists a constant $c > 0$
such that
$\mathcal{E}_{X_{1}}(f, f) \ge c \mathcal{E}_{X_{2}}(f \circ T, f \circ T)$.
This constant does not depend on $(x,n,f)$.
Therefore, 
\[ \inf\left\{\mathcal{E}_{X_{1}}(f, f) :  f(T(x)) = 1, f = 0 \text{ on } X_{1} \setminus B(T(x), A^{-1}n-B) \right\} \]
\[ \ge c \inf\left\{ \mathcal{E}_{X_{2}}(g, g) : g(x) = 1, g = 0 \text{ on } X_{2} \setminus B(x, n) \right\}. \]
Hence, $\rho_{X_{2}}(x, n) \ge c \rho_{X_{1}}(T(x), A^{-1}n-B)$.
By recalling that $X_{1}$ satisfies $(U)$, 
we see that $X_{2}$ satisfies $(U)$. 
\end{proof}

\begin{Prop}
Let $X$ be a graph such that 
there exists $C > 0$ such that
$V(x,n) \le Cn^{2}$ for any $x \in X$ and $n \ge 1$.
Let $X^{\prime}$ be a graph which is roughly isometric to $X$.
Then, $X$ and $X^{\prime}$ satisfy $(U)$. 
\end{Prop}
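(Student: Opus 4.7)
The plan is to first prove $(U)$ for $X$ directly from the volume hypothesis via a Nash--Williams lower bound on $\rho(x,n)$, and then to invoke Proposition 4.2 to transfer $(U)$ to $X^{\prime}$. The bound will force $\rho(x,n)\to\infty$ uniformly in $x$, so $X$ is automatically recurrent and Proposition 4.2 applies in its recurrent form.

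For the core estimate, fix $x\in X$, set $D=\sup_{y\in X}\deg(y)<\infty$, and let $\Pi_{k}$ denote the set of edges with exactly one endpoint in $B(x,k)$. The $\Pi_{k}$'s are pairwise disjoint cutsets separating $x$ from $B(x,n)^{c}$, so Nash--Williams yields
\[ \rho(x,n)\ge \sum_{k=0}^{n-1}\frac{1}{|\Pi_{k}|}. \]
Every edge appearing in some $\Pi_{k}$ with $k<2^{j+1}$ lies inside $B(x,2^{j+1})$, and the number of such edges is at most $\frac{D}{2}V(x,2^{j+1})$. Using $V(x,m)\le Cm^{2}$,
\[ \sum_{k=2^{j}}^{2^{j+1}-1}|\Pi_{k}|\le \frac{D}{2}V(x,2^{j+1})\le 2DC\cdot 4^{j}. \]
Cauchy--Schwarz on this block of $2^{j}$ terms then gives $\sum_{k=2^{j}}^{2^{j+1}-1}|\Pi_{k}|^{-1}\ge c_{0}$ for a constant $c_{0}>0$ depending only on $C$ and $D$, in particular independent of $x$ and $j$. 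Summing over $j=0,\dots,m-1$ produces $\rho(x,2^{m})\ge c_{0}m$, hence $\inf_{x\in X}\rho(x,n)\gtrsim\log n$. This simultaneously shows that $X$ is recurrent and that $\rho(x,n)\to\rho(x)=+\infty$ uniformly in $x$, which is exactly $(U)$ for $X$.

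To pass to $X^{\prime}$, I apply Proposition 4.2 to a rough isometry between $X$ and $X^{\prime}$. Recurrence of $X^{\prime}$ is either inherited from $X$ by Kanai's invariance of recurrence under rough isometry (see \cite{W}) or can be extracted from the proof of Proposition 4.2, which produces $\rho_{X^{\prime}}(x,n)\to+\infty$ as a byproduct via the pullback inequality $\rho_{X^{\prime}}(x,n)\ge c\,\rho_{X}(T(x),A^{-1}n-B)$. Either way, $(U)$ for $X^{\prime}$ follows.

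The main technical obstacle is securing uniformity in $x$ at the Nash--Williams step. A pointwise bound $\rho(x,n)\gtrsim_{x}\log n$ for graphs of quadratic growth is classical, but what promotes this to a \emph{uniform} logarithmic lower bound is the uniformity of $C$ and $D$ in the hypothesis: the per-block Cauchy--Schwarz constant $c_{0}$ then does not see the basepoint $x$. Once that uniformity is in hand, Proposition 4.2 handles the remaining transfer to $X^{\prime}$ with no additional work.
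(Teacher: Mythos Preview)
Your argument is correct and is precisely the route the paper points to: the paper omits its own proof and defers to Woess, Lemmas~3.12--3.13, whose content is exactly the Nash--Williams/annular estimate you carry out (with the crucial observation that the constant in the per-block bound is independent of the basepoint because $C$ and $D$ are), followed by Proposition~4.2 to pass to $X'$. One cosmetic point: with the paper's convention $V(x,n)=\mu(B(x,n))=\sum_{y\in B(x,n)}\deg(y)$, the edge count in $B(x,2^{j+1})$ is already bounded by $\tfrac{1}{2}V(x,2^{j+1})$, so the extra factor $D$ in your display is unnecessary (though harmless).
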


We can show the above assertion in the same manner as in the proof of \cite{W}, Lemma 3.12 and Lemma 3.13,
so we omit the proof.

\begin{Prop}%(UD) implies (U) 
Let $X$ be a graph such that 
\[ \lim_{n \to \infty} \inf_{x \in X} \sum_{k=0}^{n} p_{k}(x,x) = +\infty. \tag{4.1}\] 
Let $X^{\prime}$ be a graph which is roughly isometric to $X$.
Then, $X$ and $X^{\prime}$ satisfy $(U)$. 
\end{Prop}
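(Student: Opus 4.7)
The plan is to handle $X$ directly by converting the temporal hypothesis (4.1) into a uniform lower bound on $\rho(x,n)$, and then transfer $(U)$ to $X'$ via Proposition 4.2.

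For $X$ itself, first observe that (4.1) forces $\sum_{k \geq 0} p_k(x,x) = +\infty$ for every $x$, so $(X,\mu)$ is recurrent and $\rho(x) = +\infty$ for every $x$. Thus $(U)$ reduces to proving that $\inf_{x \in X} \rho(x,n) \to +\infty$ as $n \to \infty$. The bridge between the temporal hypothesis (4.1) and the spatial quantity $\rho(x,n)$ is the elementary observation that $d(S_k, x) \leq k$ under $P_x$, so that $T_{B(x,n)^c} \geq n$ almost surely.

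Let $L_x(T) = |\{0 \leq k < T : S_k = x\}|$. Combining Kumagai's identity $\rho(x,n)^{-1} = \mu_x P_x(T_x^+ > T_{B(x,n)^c})$ (already used in Lemma 2.1) with the strong Markov property at successive returns to $x$, the number of visits to $x$ before exiting $B(x,n)$ is geometric with success probability $P_x(T_x^+ > T_{B(x,n)^c})$, so
\[ \mu_x \rho(x,n) = E_x[L_x(T_{B(x,n)^c})] \geq E_x[L_x(n)] = \mu_x \sum_{k=0}^{n-1} p_k(x,x), \]
where the middle inequality uses $T_{B(x,n)^c} \geq n$. Cancelling the uniformly bounded factor $\mu_x$ and taking the infimum over $x$, condition (4.1) yields $\inf_{x \in X} \rho(x,n) \to +\infty$, which is exactly $(U)$ in the recurrent case.

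For $X'$, rough isometry preserves recurrence (a theorem of Kanai; cf.~\cite{W}), so $X'$ is recurrent, and Proposition 4.2 then transfers $(U)$ from $X$ to $X'$. I expect no serious obstacle: the only subtle point is the observation $T_{B(x,n)^c} \geq n$, which lets the temporal hypothesis on diagonal return probabilities be converted directly into the required uniform divergence of $\rho(x,\cdot)$; everything else is bookkeeping or a citation.
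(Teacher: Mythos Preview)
Your proof is correct and follows essentially the same route as the paper: both derive the key inequality $\rho(x,n) \ge \sum_{k=0}^{n-1} p_k(x,x)$ (you via the geometric number of returns before exit and the bound $T_{B(x,n)^c}\ge n$, the paper via the identity $\rho(x,n)=g^{B(x,n)}(x,x)$ from \cite{Kum} and $p_k^{B(x,n)}(x,x)=p_k(x,x)$ for $k<n$), and then both invoke Proposition~4.2 for $X'$. The only difference is cosmetic: you re-derive the identity $\mu_x\rho(x,n)=E_x[L_x(T_{B(x,n)^c})]$ instead of citing it.
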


\begin{proof}%easy
By noting \cite{Kum} Lemma 3.3(iv),
we see that $\rho(x,n) = g^{B(x,n)}(x,x)$, $x \in X$, $n \ge 1$.
Since $p_{k}^{B(x,n)}(x, x) = p_{k}(x, x)$ for $k < n$,
\[ \rho(x,n) = g^{B(x,n)}(x,x) \ge \sum_{0 \le k < n} p_{k}(x,x). \]
By noting (4.1), we see $X$ satisfies $(U)$.
Since $X$ is recurrent,
it follows from Proposition 4.2 
that $X^{\prime}$ also satisfies $(U)$. 
\end{proof}

By using Section 5 in Barlow, Coulhon and Kumagai \cite{BCK}, 
we see that the $d$-dimensional standard graphical Sierpi\'nski gaskets, $d \ge 2$, and Vicsek trees (See Barlow \cite{B} for definition) satisfies (4.1). 
Thus we have 
\begin{Exa}
The graphs which are roughly isometric with the following graphs satisfy $(U)$.\\
(i)  Infinite connected subgraphs in $\mathbb{Z}^{2}$.\\
(ii)  Infinite connected subgraphs  in the planer triangular lattice.\\
(iii) The $d$-dimensional standard graphical Sierpi\'nski gaskets, $d \ge 2$. \\
(iv) Vicsek trees.
\end{Exa}

\subsection{Transient graphs}

\begin{Prop}
Assume that a graph $X$ satisfies $(UC_{\alpha})$, $\alpha > 2$, 
that is, 
there exist $C > 0$
such that $\sup_{x \in X} p_{n}(x, x) \le Cn^{-\alpha/2}$, $n \geq 1$.
Let $X^{\prime}$ be a graph which is roughly isometric to $X$.
Then, $X$ and $X^{\prime}$ satisfy $(U)$. 
\end{Prop}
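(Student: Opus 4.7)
The plan is to prove $(U)$ for $X$ directly from the heat-kernel bound, then transfer to $X'$ via stability of $(UC_\alpha)$ under rough isometry. First I would use the identity $\rho(x,n) = g^{B(x,n)}(x,x)$, which comes from \cite{Kum} Lemma 3.3(iv) and was already used in the proof of Proposition 4.4, together with the expansion $g^{B(x,n)}(x,x) = \sum_{k \ge 0} p_k^{B(x,n)}(x,x)$. Any trajectory $S_0, \dots, S_k$ starting at $x$ with $k < n$ stays in $B(x,n)$, since $d(x, S_j) \le j \le k < n$ for each $0 \le j \le k$, so $p_k^{B(x,n)}(x,x) = p_k(x,x)$ for every $k < n$. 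Since $(UC_\alpha)$ with $\alpha > 2$ forces $X$ to be transient, letting $n \to \infty$ gives $\rho(x) = g(x,x) = \sum_{k \ge 0} p_k(x,x) < \infty$, and therefore
\begin{equation*}
0 \le \rho(x) - \rho(x,n) = \sum_{k \ge n} \bigl(p_k(x,x) - p_k^{B(x,n)}(x,x)\bigr) \le \sum_{k \ge n} p_k(x,x).
\end{equation*}

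Next I would apply $(UC_\alpha)$ to obtain the uniform tail bound
\begin{equation*}
\sup_{x \in X} \bigl(\rho(x) - \rho(x,n)\bigr) \le C \sum_{k \ge n} k^{-\alpha/2} = O\bigl(n^{1 - \alpha/2}\bigr),
\end{equation*}
which tends to $0$ precisely because $\alpha > 2$. Hence $\rho(x,n) \to \rho(x)$ uniformly in $x$, i.e.\ $X$ satisfies $(U)$.

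For $X'$, I would invoke the stability of the heat-kernel upper bound $(UC_\alpha)$ under rough isometry between weighted graphs having uniformly bounded degrees and weights (as guaranteed by the standing assumptions of the paper). This is a classical fact, following for instance from the equivalence between $(UC_\alpha)$ and an appropriate Nash or Faber--Krahn inequality, which is manifestly stable under rough isometry in the spirit of Theorem 3.10 in \cite{W} used in Proposition 4.2. Once $X'$ is known to satisfy $(UC_\alpha)$, the argument of the previous two paragraphs applies verbatim to $X'$.

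The main obstacle is conceptual rather than technical, and it lies entirely in the transfer step: the implication $(UC_\alpha) \Rightarrow (U)$ on the original graph is the one-line Green's-function tail computation above, while the reduction for $X'$ relies on citing or sketching the (standard but non-trivial) rough-isometry-invariance of polynomial-decay heat-kernel bounds, which is the only part that is not immediate from the material developed earlier in the paper.
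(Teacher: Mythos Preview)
Your proposal is correct and follows essentially the same route as the paper: bound $\rho(x)-\rho(x,n)$ by the tail $\sum_{k\ge n}p_k(x,x)$ via the identity $\rho(x,n)=g^{B(x,n)}(x,x)$ and the fact that $p_k^{B(x,n)}(x,x)=p_k(x,x)$ for $k<n$, then use $(UC_\alpha)$ with $\alpha>2$ for uniform decay; for $X'$ invoke rough-isometry stability of $(UC_\alpha)$. The only cosmetic differences are that the paper first compares $\rho(x,m)-\rho(x,n)$ for $m>n$ and then lets $m\to\infty$ (rather than directly writing $\rho(x)=g(x,x)$), and cites Varopoulos \cite{V} and Kanai \cite{Kan2} specifically for the stability step instead of the Nash/Faber--Krahn heuristic you sketch.
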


\begin{proof}
Let $m > n$. 
Then, by using \cite{K} Lemma 3.3(iv) and $p_{k}^{B(x,m)}(x, x) = p_{k}^{B(x,n)}(x, x) = p_{k}(x, x)$ for $k < n$,
\begin{align*}
\rho(x,m) - \rho(x,n) &= g^{B(x,m)}(x,x) - g^{B(x,n)}(x,x) \\
&= \sum_{k \ge n} (p_{k}^{B(x,m)}(x, x) - p_{k}^{B(x,n)}(x, x)) \\
&\le \sum_{k \ge n} p_{k}(x, x).
\end{align*}

Letting $m \to \infty$,
\[ \rho(x) - \rho(x, n) \le  \sum_{k \geq n} p_{k}(x, x), x \in X, n \geq 1.\]
Thus we see that if $X$ satisfies $(UC_{\alpha})$ for some $\alpha > 2$, then $X$ satisfies $(U)$.

The stability of the property $(UC_{\alpha})$, $\alpha > 2$, under rough isometry follows from Varopoulous \cite{V} Theorem 1 and 2, and, Kanai \cite{Kan2} Proposition 2.1.
Thus we see that $X^{\prime}$ also satisfies $(U)$. 
\end{proof}

$\mathbb{Z}^{d}$ satisfies $(UC_{d})$.
By using Barlow and Bass \cite{BB1}, \cite{BB2},
we see that if $d \ge 3$, 
then $d$-dimensional standard graphical Sierpi\'nski carpet satisfies $(UC_{\alpha})$ for some $\alpha > 2$. 
Therefore we have 
\begin{Exa}
The graphs which are roughly isometric with the following graphs satisfy $(U)$.\\
(i) $\mathbb{Z}^{d}$, $d \ge 3$.\\
(ii) $d$-dimensional standard graphical Sierpi\'nski carpet, $d \ge 3$.
\end{Exa}

\subsection{A graph which does not satisfy $(U)$}

Finally, we give an example of a graph which does not satisfy $(U)$. 

\begin{Rem}
The recurrent tree $T$ treated in \cite{W}, Example 6.16 does not satisfy $(U)$. 
For any $n \geq 1$, there exists $x_{n} \in T$ such that $\rho(x_{n}, n) = \rho_{T_{4}}(x_{n}, n)$,
where $T_{4}$ is the $4$-regular tree.
Since  $T_{4}$ is vertex transitive and transient, 
we have that 
$\rho_{T_{4}}(x_{n}, n) \le \rho_{T_{4}}(x_{n}) = \rho_{T_{4}}(o) < +\infty$, 
$n \ge 1$, 
for a reference point $o \in T_{4}$.
However, $T$ is recurrent and hence $\rho(x, n) \to \infty$, $n \to \infty$, $x \in T$. 
Thus we see that $T$ does not satisfy $(U)$.
\end{Rem}

\textit{Acknowledgement} \, The author wishes to express his gratitude to Prof. K. Hattori for pointing out a flaw in a previous version of the paper.

\end{document}